\date{}
\begin{document}

\centerline{}

\centerline {\Large{\bf Some properties of $K$-frame in $n$-Hilbert space}}

%% My definition
\newcommand{\mvec}[1]{\mbox{\bfseries\itshape #1}}
\centerline{}
\centerline{\textbf{Prasenjit Ghosh}}
\centerline{Department of Pure Mathematics, University of Calcutta,}
\centerline{35, Ballygunge Circular Road, Kolkata, 700019, West Bengal, India}
\centerline{e-mail: prasenjitpuremath@gmail.com}
\centerline{}
\centerline{\textbf{T. K. Samanta}}
\centerline{Department of Mathematics, Uluberia College,}
\centerline{Uluberia, Howrah, 711315,  West Bengal, India}
\centerline{e-mail: mumpu$_{-}$tapas5@yahoo.co.in}

\newtheorem{Theorem}{\quad Theorem}[section]

\newtheorem{definition}[Theorem]{\quad Definition}

\newtheorem{theorem}[Theorem]{\quad Theorem}

\newtheorem{remark}[Theorem]{\quad Remark}

\newtheorem{corollary}[Theorem]{\quad Corollary}

\newtheorem{note}[Theorem]{\quad Note}

\newtheorem{lemma}[Theorem]{\quad Lemma}

\newtheorem{example}[Theorem]{\quad Example}

\newtheorem{result}[Theorem]{\quad Result}
\newtheorem{conclusion}[Theorem]{\quad Conclusion}

\newtheorem{proposition}[Theorem]{\quad Proposition}

\begin{abstract}
\textbf{\emph{The notion of a K-frame in n-Hilbert space is presented and some of their characterizations are given.\,We verify that sum of two K-frames is also a K-frame in n-Hilbert space.\,Also, the concept of tight K-frame in n-Hilbert space is described and some properties of its are going to be established.}}
\end{abstract}
{\bf Keywords:}  \emph{Frame, $K$-frame, n-normed space, n-inner product space, Frame  \\ \smallskip \hspace{2cm} in n-inner product space.}

{\bf 2010 Mathematics Subject Classification:} \emph{42C15, 46C07, 46C50.}
\\

%=====================================
\section{Introduction}
%=====================================
 
In 1952, Duffin and Schaeffer introduced frames in Hilbert spaces in their fundamental paper \cite{Duffin}, they used frames as a tool in the study of nonharmonic Fourier series.\,Later in 1986, the formal definition of frame in the abstract Hilbert spaces were given by Daubechies, Grossman, Meyer \cite{Daubechies}.\,A frame for a Hilbert space is a generalization of an orthonormal basis and this is such a tool that also allows each vector in this space can be written as a linear combination of elements from the frame but, linear independence among the frame elements is not required.\;Such frames play an important role in Gabor and wavelet analysis.\;Several generalizations of frames  namely, \,$g$-frame \cite{Sun}, \,$K$-frames \cite{Gavruta} etc. have been introduced in recent times.\;$K$-frames for a separable Hilbert spaces were introduced by Lara Gavruta to study the basic notions about atomic system for a bounded linear operator.\;In recent times, \,$K$-frame was presented to reconstruct elements from the range of a bounded linear operator\,$K$\, in a separable Hilbert space.\;$K$-frames are more generalization than the ordinary frames and many properties of ordinary frames may not holds for such generalization of frames.\,Generalized atomic subspaces for operators in Hilbert spaces were studied by P.\,Ghosh and T.\,K.\,Samanta {\cite{Ghosh}} and they were also presented the stability of dual \,$g$-fusion frames in Hilbert spaces in {\cite{P}}. 

The concept of \,$2$-inner product space was first introduced by Diminnie, Gahler and White \cite{Diminnie} in 1970's.\;In 1989, A.\,Misiak \cite{Misiak} developed the generalization of a \,$2$-inner product space for \,$n \,\geq\, 2$.\;Frame in \,$n$-Hilbert space was presented by P. Ghosh and T. K. Samanta \cite{Prasenjit}.\,They also discussed frame in tensor product of \,$n$-Hilbert spaces \cite{G}.

In this paper, we shall present the notion of a \,$K$-frame relative to \,$n$-Hilbert space and discuss some properties.\;Further it will be seen that the family of all \,$K$-frames is closed with respect to addition in \,$n$-Hilbert space.\;We also give the notion of a tight \,$K$-frame in \,$n$-Hilbert space.

Throughout this paper,\;$H$\; will denote a separable Hilbert space with the inner product \,$\left <\,\cdot \,,\, \cdot\,\right>$\, and \,$\mathcal{B}\,(\,H\,)$\; denote the space of all bounded linear operator on \,$H$.\;We also denote \,$\mathcal{R}\,(\,T\,)$\; for range set of \,$T$\; where \,$T \,\in\, \mathcal{B}\,(\,H\,)$\, and \;$l^{\,2}\,(\,\mathbb{N}\,)$\; denote the space of square summable scalar-valued sequences with  index set \,$\mathbb{N}$.

%=====================================
\section{Preliminaries}
%=====================================

\begin{theorem}(\,Douglas' factorization theorem\,)\,{\cite{Douglas}}\label{th1}
Let \,$U,\, V \,\in\, \mathcal{B}\,(\,H\,)$.\;Then the following conditions are equivalent:
\begin{itemize}
\item[(\,I\,)]\;\;$\mathcal{R}\,(\,U\,) \,\subseteq\, \mathcal{R}\,(\,V\,)$.
\item[(\,II\,)]\;\;$U\,U^{\,\ast} \,\leq\, \lambda^{\,2}\; V\,V^{\,\ast}$\, for some \,$\lambda \,>\, 0$.
\item[(\,III\,)]\;\;$U \,=\, V\,W$\, for some \,$W \,\in\, \mathcal{B}\,(\,H\,)$.
\end{itemize}
\end{theorem}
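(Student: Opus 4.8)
The plan is to establish the three-way equivalence by treating condition $(\mathrm{III})$ as the hub: I would prove the two implications out of $(\mathrm{III})$ directly, and then show that each of $(\mathrm{I})$ and $(\mathrm{II})$ separately forces the factorization in $(\mathrm{III})$. Since $(\mathrm{III})$ implies both of the others, obtaining $(\mathrm{I}) \Rightarrow (\mathrm{III})$ and $(\mathrm{II}) \Rightarrow (\mathrm{III})$ closes every loop and yields the full equivalence, without ever needing the awkward direct passages between $(\mathrm{I})$ and $(\mathrm{II})$.

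First I would dispose of the easy implications out of $(\mathrm{III})$. If $U = V\,W$, then $U\,x = V\,(W\,x) \in \mathcal{R}\,(V)$ for every $x \in H$, which is exactly $(\mathrm{I})$. For $(\mathrm{II})$ I would compute $U\,U^{\,\ast} = V\,W\,W^{\,\ast}\,V^{\,\ast}$ and use $W\,W^{\,\ast} \leq \|W\|^{\,2}\,I$ to deduce $U\,U^{\,\ast} \leq \|W\|^{\,2}\,V\,V^{\,\ast}$, so that $(\mathrm{II})$ holds with $\lambda = \|W\|$.

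The heart of the matter is the construction of $W$. Starting from $(\mathrm{II})$, I would rewrite $U\,U^{\,\ast} \leq \lambda^{\,2}\,V\,V^{\,\ast}$ as $\|U^{\,\ast}\,y\|^{\,2} \leq \lambda^{\,2}\,\|V^{\,\ast}\,y\|^{\,2}$ for all $y \in H$, and then define a map $D$ on $\mathcal{R}\,(V^{\,\ast})$ by $D\,(V^{\,\ast}\,y) = U^{\,\ast}\,y$. The displayed inequality makes $D$ both well defined, since $V^{\,\ast}\,y_{\,1} = V^{\,\ast}\,y_{\,2}$ forces $U^{\,\ast}\,y_{\,1} = U^{\,\ast}\,y_{\,2}$, and bounded by $\lambda$; extending $D$ continuously to $\overline{\mathcal{R}\,(V^{\,\ast})}$ and by zero on the complementary subspace $\ker V = \bigl(\overline{\mathcal{R}\,(V^{\,\ast})}\bigr)^{\perp}$ produces $D \in \mathcal{B}\,(H)$ with $D\,V^{\,\ast} = U^{\,\ast}$. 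Taking adjoints gives $V\,D^{\,\ast} = U$, so $W = D^{\,\ast}$ witnesses $(\mathrm{III})$. Starting instead from $(\mathrm{I})$, I would define $W\,x$ to be the unique preimage of $U\,x$ lying in $(\ker V)^{\perp}$ — well defined precisely because $\mathcal{R}\,(U) \subseteq \mathcal{R}\,(V)$ and $V$ is injective on $(\ker V)^{\perp}$ — and establish boundedness of $W$ through the closed graph theorem.

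I expect the boundedness of the constructed operator to be the main obstacle. Under hypothesis $(\mathrm{II})$ the operator inequality supplies the norm bound essentially for free, so the only care needed lies in the well-definedness and the continuous extension across the closure. Under hypothesis $(\mathrm{I})$, by contrast, there is no a priori quantitative control, and one must invoke the completeness of $H$ via the closed graph theorem: if $x_{\,n} \to x$ and $W\,x_{\,n} \to z$, then continuity of $V$ gives $V\,z = \lim V\,(W\,x_{\,n}) = \lim U\,x_{\,n} = U\,x$, while $z \in (\ker V)^{\perp}$ because that subspace is closed, so uniqueness of the preimage forces $z = W\,x$ and the graph is closed.
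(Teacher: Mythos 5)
The paper does not prove this theorem at all: it is quoted in the Preliminaries as a known result with a citation to Douglas (1966), so there is no in-paper argument to compare against. Your proposal is a correct and complete proof, and it is in fact the classical one: the hub structure around $(\mathrm{III})$ is sound, the two easy implications out of $(\mathrm{III})$ are verified correctly (the inequality $U\,U^{\,\ast} \leq \|W\|^{\,2}\,V\,V^{\,\ast}$ follows from $\|W^{\,\ast}V^{\,\ast}y\|^{\,2} \leq \|W\|^{\,2}\,\|V^{\,\ast}y\|^{\,2}$), the construction of $D$ on $\mathcal{R}\,(V^{\,\ast})$ with extension by continuity and by zero on $\ker V = \bigl(\overline{\mathcal{R}\,(V^{\,\ast})}\bigr)^{\perp}$ is exactly Douglas' argument for $(\mathrm{II}) \Rightarrow (\mathrm{III})$, and the closed-graph argument for $(\mathrm{I}) \Rightarrow (\mathrm{III})$ (uniqueness of the preimage in $(\ker V)^{\perp}$, closedness of that subspace, continuity of $V$) is also the standard route. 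Nothing is missing; one could shorten the work by proving only the cycle $(\mathrm{I}) \Rightarrow (\mathrm{III}) \Rightarrow (\mathrm{II}) \Rightarrow (\mathrm{III}) \Rightarrow (\mathrm{I})$, but your redundancy costs nothing.
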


\begin{theorem}{\cite{Williams}}\label{thm1}
Let \,$S,\, T,\, U \,\in\, \mathcal{B}\,(\,H\,)$.\;Then the following are equivalent:
\begin{itemize}
\item[(\,I\,)]\;\;$\mathcal{R}\,(\,S\,) \,\subseteq\, \mathcal{R}\,(\,T\,) \,+\, \mathcal{R}\,(\,U\,)$.
\item[(\,II\,)]\;\;$S\,S^{\,\ast} \,\leq\, \lambda^{\,2}\;(\, T\,T^{\,\ast} \,+\, U\,U^{\,\ast}\,)$\, for some \,$\lambda \,>\, 0$.
\item[(\,III\,)]\;\;$S \,=\, T\,A \,+\, U\,B$\, for some \,$A \,,\, B \,\in\, \mathcal{B}\,(\,H\,)$.
\end{itemize}
\end{theorem}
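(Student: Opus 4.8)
The plan is to reduce this three-operator statement to the classical two-operator Douglas factorization theorem (Theorem \ref{th1}) by packaging \,$T$\, and \,$U$\, into a single operator on the direct sum \,$H \oplus H$.\;Specifically, I would define \,$V \colon H \oplus H \,\to\, H$\, by \,$V\,(\,x \oplus y\,) \,=\, T\,x \,+\, U\,y$.\;A direct check shows that \,$V$\, is bounded and linear, that its adjoint \,$V^{\,\ast} \colon H \,\to\, H \oplus H$\, is given by \,$V^{\,\ast}\,z \,=\, (\,T^{\,\ast}\,z\,) \oplus (\,U^{\,\ast}\,z\,)$, and consequently that \,$V\,V^{\,\ast} \,=\, T\,T^{\,\ast} \,+\, U\,U^{\,\ast}$\, while \,$\mathcal{R}\,(\,V\,) \,=\, \mathcal{R}\,(\,T\,) \,+\, \mathcal{R}\,(\,U\,)$.

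With these identities in hand, each of the three conditions translates verbatim into a two-operator condition relating \,$S$\, and \,$V$.\;Condition (\,I\,) becomes \,$\mathcal{R}\,(\,S\,) \,\subseteq\, \mathcal{R}\,(\,V\,)$;\;condition (\,II\,) becomes \,$S\,S^{\,\ast} \,\leq\, \lambda^{\,2}\; V\,V^{\,\ast}$;\;and for condition (\,III\,), writing any \,$W \,\in\, \mathcal{B}\,(\,H\,,\, H \oplus H\,)$\, in the form \,$W\,z \,=\, (\,A\,z\,) \oplus (\,B\,z\,)$\, with \,$A\,,\, B \,\in\, \mathcal{B}\,(\,H\,)$, the factorization \,$S \,=\, V\,W$\, is exactly \,$S \,=\, T\,A \,+\, U\,B$.\;Thus the equivalence of (\,I\,),\,(\,II\,),\,(\,III\,) is nothing but the equivalence of the three Douglas conditions for the pair \,$(\,S\,,\, V\,)$, and because the reduction is an equivalence in each clause, all three arrows are obtained simultaneously.

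The last step is to invoke Theorem \ref{th1}.\;The only subtlety — and the main point to handle with care — is that Douglas' theorem as recorded in Theorem \ref{th1} is phrased for operators on a single space \,$H$, whereas here \,$V$\, acts from \,$H \oplus H$\, into \,$H$.\;I would address this by observing that Douglas' theorem holds without change for bounded operators acting between two possibly distinct Hilbert spaces, since its proof never uses that domain and codomain coincide; applying it to the pair \,$(\,S\,,\, V\,)$\, over the spaces \,$H \oplus H$\, and \,$H$\, then delivers the full chain of equivalences.\;No genuinely new estimate is required beyond the direct-sum bookkeeping: the entire content lies in the correct identification of \,$V\,V^{\,\ast}$\, and \,$\mathcal{R}\,(\,V\,)$, after which the result follows formally.
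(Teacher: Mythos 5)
The paper offers no proof of this statement: it is imported verbatim from Fillmore and Williams \cite{Williams} as a quoted preliminary, so there is nothing internal to compare against. Your argument is correct and is in fact the standard derivation of the three-operator version from Douglas' theorem: the operator \,$V \,:\, H \,\oplus\, H \,\to\, H$,\; $V\,(\,x \,\oplus\, y\,) \,=\, T\,x \,+\, U\,y$\, satisfies \,$V\,V^{\,\ast} \,=\, T\,T^{\,\ast} \,+\, U\,U^{\,\ast}$\, and \,$\mathcal{R}\,(\,V\,) \,=\, \mathcal{R}\,(\,T\,) \,+\, \mathcal{R}\,(\,U\,)$, every \,$W \,\in\, \mathcal{B}\,(\,H,\, H \,\oplus\, H\,)$\, decomposes as \,$W\,z \,=\, A\,z \,\oplus\, B\,z$\, with \,$A,\, B \,\in\, \mathcal{B}\,(\,H\,)$, and Douglas' theorem does hold, with the same proof, for operators between distinct Hilbert spaces --- a point you rightly flag as the only subtlety, since Theorem \ref{th1} as recorded in the paper is stated only for operators on a single space \,$H$. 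Each clause translates biconditionally, so all three equivalences follow at once; no gap.
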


\begin{theorem}(\cite{Christensen})\label{thm1.1}
Let \,$H_{\,1},\, H_{\,2}$\; be two Hilbert spaces and \;$U \,:\, H_{\,1} \,\to\, H_{\,2}$\; be a bounded linear operator with closed range \;$\mathcal{R}_{\,U}$.\;Then there exists a bounded linear operator \,$U^{\dagger} \,:\, H_{\,2} \,\to\, H_{\,1}$\, such that \,$U\,U^{\dagger}\,x \,=\, x\; \;\forall\; x \,\in\, \mathcal{R}_{\,U}$.
\end{theorem}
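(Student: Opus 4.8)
The plan is to realise $U^{\dagger}$ as the bounded inverse of a suitable restriction of $U$, composed with the orthogonal projection onto the range. Since $\mathcal{R}_{\,U}$ is closed in $H_{\,2}$, it is itself a Hilbert space, and $H_{\,1}$ admits the orthogonal decomposition $H_{\,1} \,=\, \ker(\,U\,) \,\oplus\, \ker(\,U\,)^{\,\perp}$. First I would consider the restriction $\widetilde{U} \,=\, U\big|_{\ker(\,U\,)^{\,\perp}} \,:\, \ker(\,U\,)^{\,\perp} \,\to\, \mathcal{R}_{\,U}$ and check that it is a bijection: it is injective because its kernel is $\ker(\,U\,) \,\cap\, \ker(\,U\,)^{\,\perp} \,=\, \{\,0\,\}$, and it is surjective onto $\mathcal{R}_{\,U}$ because every $U\,x$ equals $U$ applied to the component of $x$ lying in $\ker(\,U\,)^{\,\perp}$.

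The key step is to produce a bounded inverse for $\widetilde{U}$. Here the closedness of $\mathcal{R}_{\,U}$ is essential: it guarantees that $\widetilde{U}$ is a continuous bijection between the Hilbert spaces $\ker(\,U\,)^{\,\perp}$ and $\mathcal{R}_{\,U}$, so the bounded inverse theorem applies and yields a bounded operator $\widetilde{U}^{\,-1} \,:\, \mathcal{R}_{\,U} \,\to\, \ker(\,U\,)^{\,\perp}$. I expect this to be the main (and essentially the only) obstacle; without the closed-range hypothesis the algebraic inverse would still exist but need not be bounded, so the completeness of $\mathcal{R}_{\,U}$ is precisely what makes the argument work.

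Finally I would define $U^{\dagger} \,:\, H_{\,2} \,\to\, H_{\,1}$ by $U^{\dagger} \,=\, \widetilde{U}^{\,-1}\, P$, where $P \,:\, H_{\,2} \,\to\, \mathcal{R}_{\,U}$ is the orthogonal projection onto $\mathcal{R}_{\,U}$. Being a composition of bounded linear maps, $U^{\dagger}$ is itself bounded and linear. To verify the asserted identity, take $x \,\in\, \mathcal{R}_{\,U}$; then $P\,x \,=\, x$, so $U^{\dagger}\,x \,=\, \widetilde{U}^{\,-1}\,x \,\in\, \ker(\,U\,)^{\,\perp}$, whence $U\,U^{\dagger}\,x \,=\, \widetilde{U}\,\widetilde{U}^{\,-1}\,x \,=\, x$, which is exactly the required conclusion.
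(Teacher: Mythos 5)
The paper does not prove this statement at all --- it is quoted verbatim from \cite{Christensen} as a known result (the standard existence of the pseudo-inverse for a closed-range operator), so there is no in-paper argument to compare against. Your proof is correct and is precisely the standard textbook argument: the restriction of $U$ to $\ker(U)^{\perp}$ is a continuous bijection onto the closed (hence complete) subspace $\mathcal{R}_{U}$, the bounded inverse theorem gives boundedness of $\widetilde{U}^{-1}$, and composing with the orthogonal projection onto $\mathcal{R}_{U}$ yields $U^{\dagger}$ with $U\,U^{\dagger}x = x$ on $\mathcal{R}_{U}$; you also correctly identify the closed-range hypothesis as the one load-bearing assumption.
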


\begin{note}
The operator \,$U^{\dagger}$\, defined in Theorem (\ref{thm1.1}), is called the pseudo-inverse of \,$U$.
\end{note}

\begin{definition}\cite{Christensen}
A sequence \,$\left\{\,f_{\,i}\,\right\}_{i \,=\, 1}^{\infty} \,\subseteq\, H$\, is said to be a frame for \,$H$\, if there exist constants \,$0 \,<\, A \,\leq\, B \,<\, \infty$\, such that
\[ A\; \|\,f\,\|^{\,2} \,\leq\, \;\sum\limits_{i \,=\, 1}^{\infty}\;  \left|\ \left <\,f \,,\, f_{\,i} \, \right >\,\right|^{\,2} \,\leq\, B \;\|\,f\,\|^{\,2}\; \;\forall\; f \,\in\, H \]
The constants \,$A$\, and \,$B$\, are called frame bounds.\;If \,$\left\{\,f_{\,i}\,\right\}_{i \,=\, 1}^{\infty}$\; satisfies the inequality 
\[\sum\limits_{i \,=\, 1}^{\infty}\;  \left|\,\left <\,f \,,\, f_{\,i} \, \right >\,\right|^{\,2} \,\leq\, B \;\|\,f\,\|^{\,2}\; \;\forall\; f \,\in\, H\]
then it is called a Bessel sequence with bound \,$B$.
\end{definition}

\begin{definition}\cite{Christensen}
Let \,$\left\{\,f_{\,i}\,\right\}_{i \,=\, 1}^{\infty}$\; be a frame for \,$H$.\;Then the bounded linear operator \,$ T \,:\, l^{\,2}\,(\,\mathbb{N}\,) \,\to\, H$, defined by \,$T\,\{\,c_{i}\,\} \,=\, \sum\limits_{i \,=\, 1}^{\infty} \;c_{\,i}\,f_{\,i}$, is called  pre-frame operator and its adjoint operator \,$T\,^{\,\ast} \,:\, H \,\to\, l^{\,2}\,(\,\mathbb{N}\,)$, given by \;$T^{\,\ast}\,(\,f\,) \,=\, \left \{\, \left <\,f \,,\, f_{i} \,\right >\,\right \}_{i \,=\, 1}^{\infty}$\; is called the analysis operator.\;The operator \,$S \,:\, H \,\to\, H$\; defined by \,$S\,(\,f\,) \,=\, T\,T^{\,\ast}\,(\,f\,) \,=\, \sum\limits^{\infty}_{i \,=\, 1}\; \left <\,f \,,\, f_{\,i} \, \right >\,f_{\,i}\; \,\forall\; f \,\in\, H$\; is called the frame operator.
\end{definition}

\begin{definition}{\cite{Gavruta}}
Let \,$K \,\in\, \mathcal{B}\,(\,H\,)$.\;Then a sequence \,$\{\,f_{\,i}\,\}_{i \,=\, 1}^{\infty}$\, in \,$H$\, is said to be a \,$K$-frame for \,$H$\, if there exist constants \,$0 \,<\, A \,\leq\, B \,<\, \infty$\, such that
\[A \,\left \|\,K^{\,\ast}\,f\, \right \|^{\,2} \,\leq\, \sum\limits^{\infty}_{i \,=\, 1}\, \left |\,\left <\,f \,,\, f_{\,i}\,\right >\,\right|^{\,2} \,\leq\, B\;\left\|\,f\,\right\|^{\,2}\; \;\forall\; f \,\in\, H.\]
\end{definition}

\begin{theorem}{\cite{Xiao}}\label{th1.01}
Let \,$K$\, be a bounded linear operator on \,$H$.\;Then a Bessel sequence \,$\{\,f_{\,i}\,\}_{i \,=\, 1}^{\infty}$\, in \,$H$\; is a \,$K$-frame if and only if there exists \,$\lambda \,>\, 0$\, such that \,$S \,\geq\, \lambda\; K\,K^{\ast}$, where \,$S$\, is the frame operator for \,$\{\,f_{\,i}\,\}_{i \,=\, 1}^{\infty}$.
\end{theorem}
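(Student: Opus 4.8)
The plan is to reduce both the $K$-frame inequalities and the operator inequality $S \geq \lambda\,KK^{\ast}$ to statements about the quadratic form $f \mapsto \langle Sf, f\rangle$, and then to observe that they coincide. The starting observation is that, for a Bessel sequence with analysis operator $T^{\ast}$ and frame operator $S = T\,T^{\ast}$, one has the identity $\sum_{i=1}^{\infty} \left|\langle f, f_i\rangle\right|^2 = \|T^{\ast} f\|^2 = \langle S f, f\rangle$ for every $f \in H$. Likewise, $\|K^{\ast} f\|^2 = \langle K^{\ast} f, K^{\ast} f\rangle = \langle K K^{\ast} f, f\rangle$. These two rewritings carry the entire computational content of the proof.

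First I would handle the forward direction. Assuming $\{f_i\}$ is a $K$-frame with lower bound $A > 0$, the defining lower inequality becomes $A\,\langle K K^{\ast} f, f\rangle \leq \langle S f, f\rangle$ for all $f \in H$, after applying the two identities above. Since both $S$ and $K K^{\ast}$ are positive self-adjoint operators, this pointwise inequality between quadratic forms is precisely the operator inequality $S \geq A\,K K^{\ast}$, so the claim holds with $\lambda = A$.

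Conversely, suppose $S \geq \lambda\, K K^{\ast}$ for some $\lambda > 0$. Unwinding the operator order gives $\langle S f, f\rangle \geq \lambda\,\langle K K^{\ast} f, f\rangle$, that is, $\sum_{i=1}^{\infty} \left|\langle f, f_i\rangle\right|^2 \geq \lambda\,\|K^{\ast} f\|^2$ for every $f$. This supplies the lower $K$-frame bound with constant $\lambda$; the upper bound is already furnished by the hypothesis that $\{f_i\}$ is Bessel, say with bound $B$. Hence $\{f_i\}$ is a $K$-frame with bounds $\lambda$ and $B$.

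I do not anticipate any genuine obstacle here: the whole argument hinges on the single fact that an inequality between positive self-adjoint operators is equivalent to the corresponding inequality between their quadratic forms evaluated at all vectors, together with the identification of $\sum_i \left|\langle f, f_i\rangle\right|^2$ with $\langle S f, f\rangle$. The one point worth stating carefully is the role of the Bessel assumption, which is what guarantees that $S = T\,T^{\ast}$ is a well-defined bounded positive operator and that the upper frame bound is automatically present, so that only the lower bound must be matched against the operator inequality.
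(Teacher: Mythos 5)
The paper does not prove this statement itself; it is quoted as a preliminary from the reference of Xiao, Zhu and Gavruta, so there is no internal proof to compare against. Your argument is correct and is the standard one: the identity $\sum_{i}\left|\left\langle f, f_{i}\right\rangle\right|^{2} = \left\langle S f, f\right\rangle$ together with $\left\|K^{\ast}f\right\|^{2} = \left\langle K K^{\ast} f, f\right\rangle$ reduces the lower $K$-frame inequality to the quadratic-form formulation of $S \geq \lambda\, K K^{\ast}$, and the Bessel hypothesis supplies the upper bound and the boundedness of $S$; nothing further is needed.
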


\begin{definition}\cite{Mashadi}
A real valued function \,$\left\|\,\cdot \,,\, \cdots \,,\, \cdot \,\right\| \,:\, X^{\,n} \,\to\, \mathbb{R}$\; is called a n-norm on \,$X$\; if the following conditions hold:
\begin{itemize}
\item[(I)]\;\; $\left\|\,x_{\,1} \,,\, x_{\,2} \,,\, \cdots \,,\, x_{\,n}\,\right\| \,=\, 0$\; if and only if \,$x_{\,1},\, \cdots,\, x_{\,n}$\, are linearly dependent,
\item[(II)]\;\;\; $\left\|\,x_{\,1} \,,\, x_{\,2} \,,\, \cdots \,,\, x_{\,n}\,\right\|$\; is invariant under any permutations of \,$x_{\,1},\, x_{\,2},\, \cdots,\, x_{\,n}$,
\item[(III)]\;\;\; $\left\|\,\alpha\,x_{\,1} \,,\, x_{\,2} \,,\, \cdots \,,\, x_{\,n}\,\right\| \,=\, |\,\alpha\,|\, \left\|\,x_{\,1} \,,\, x_{\,2} \,,\, \cdots \,,\, x_{\,n}\,\right\|\;\; \;\forall \;\; \alpha \,\in\, \mathbb{K}$,
\item[(IV)]\;\; $\left\|\,x \,+\, y \,,\, x_{\,2} \,,\, \cdots \,,\, x_{\,n}\,\right\| \,\leq\, \left\|\,x \,,\, x_{\,2} \,,\, \cdots \,,\, x_{\,n}\,\right\| \,+\,  \left\|\,y \,,\, x_{\,2} \,,\, \cdots \,,\, x_{\,n}\,\right\|$.
\end{itemize}
The pair \,$\left(\,X \,,\, \left\|\,\cdot \,,\, \cdots \,,\, \cdot \,\right\|\,\right)$\; is then called a linear n-normed space. 
\end{definition}

\begin{definition}\cite{Misiak}
Let \,$n \,\in\, \mathbb{N}$\; and \,$X$\, be a linear space of dimension greater than or equal to \,$n$\; over the field \,$\mathbb{K}$, where \,$\mathbb{K}$\, is the real or complex numbers field.\;A function \,$\left<\,\cdot \,,\, \cdot \,|\, \cdot \,,\, \cdots \,,\, \cdot\,\right> \,:\, X^{\,n \,+\, 1} \,\to\,  \mathbb{K}$\; is satisfying the following five properties:
\begin{itemize}
\item[(I)]\;\; $\left<\,x_{\,1} \,,\, x_{\,1} \;|\; x_{\,2} \,,\, \cdots \,,\, x_{\,n} \,\right> \;\geq\;  0$\; and \;$\left<\,x_{\,1} \,,\, x_{\,1} \;|\; x_{\,2} \,,\, \cdots \,,\, x_{\,n} \,\right> \;=\;  0$\; if and only if \;$x_{\,1},\, x_{\,2},\, \cdots,\, x_{\,n}$\; are linearly dependent,
\item[(II)]\;\; $\left<\,x \,,\, y \;|\; x_{\,2} \,,\, \cdots \,,\, x_{\,n} \,\right> \;=\; \left<\,x \,,\, y \;|\; x_{\,i_{\,2}} \,,\, \cdots \,,\, x_{\,i_{\,n}} \,\right> $\, for every permutation \;$\left(\, i_{\,2},\, \cdots,\, i_{\,n} \,\right)$\; of \;$\left(\,2,\, \cdots,\, n \,\right)$,
\item[(III)]\;\; $\left<\,x \,,\, y \;|\; x_{\,2} \,,\, \cdots \,,\, x_{\,n} \,\right> \;=\; \overline{\left<\,y \,,\, x \;|\; x_{\,2} \,,\, \cdots \,,\, x_{\,n} \,\right> }$,
\item[(IV)]\;\; $\left<\,\alpha\,x \,,\, y \;|\; x_{\,2} \,,\, \cdots \,,\, x_{\,n} \,\right> \;=\; \alpha \,\left<\,x \,,\, y \;|\; x_{\,2} \,,\, \cdots \,,\, x_{\,n} \,\right> $, \;for all \;$ \alpha \;\in\; \mathbb{K}$,
\item[(V)]\;\; $\left<\,x \,+\, y \,,\, z \;|\; x_{\,2} \,,\, \cdots \,,\, x_{\,n} \,\right> \;=\; \left<\,x \,,\, z \;|\; x_{\,2} \,,\, \cdots \,,\, x_{\,n} \,\right> \,+\,  \left<\,y \,,\, z \;|\; x_{\,2} \,,\, \cdots \,,\, x_{\,n} \,\right>$.
\end{itemize}
is called an n-inner product on \,$X$\, and the pair \,$\left(\,X \,,\, \left<\,\cdot \,,\, \cdot \,|\, \cdot \,,\, \cdots \,,\, \cdot\,\right> \,\right)$\; is called n-inner product space.
\end{definition}

\begin{theorem}\cite{Gunawan}
For \,$n$-inner product space \,$\left(\,X \,,\, \left<\,\cdot \,,\, \cdot \,|\, \cdot \,,\, \cdots \,,\, \cdot\,\right>\,\right)$, 
\[\left|\,\left<\,x \,,\, y \,|\, x_{\,2} \,,\,  \cdots \,,\, x_{\,n}\,\right>\,\right| \,\leq\, \left\|\,x \,,\, x_{\,2} \,,\, \cdots \,,\, x_{\,n}\,\right\|\, \left\|\,y \,,\, x_{\,2} \,,\, \cdots \,,\, x_{\,n}\,\right\|\]
hold for all \,$x,\, y,\, x_{\,2},\, \cdots,\, x_{\,n} \,\in\, X$.
\end{theorem}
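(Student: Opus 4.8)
The plan is to reduce the statement to the classical Cauchy--Schwarz inequality for a positive semi-definite Hermitian sesquilinear form. First I would fix the vectors $x_{\,2},\, \cdots,\, x_{\,n}$ and regard the map $(\,u \,,\, v\,) \,\mapsto\, \left<\,u \,,\, v \;|\; x_{\,2} \,,\, \cdots \,,\, x_{\,n} \,\right>$ as a form on $X$. By properties (IV) and (V) it is linear in its first slot, by property (III) it is conjugate symmetric, and by property (I) it satisfies $\left<\,u \,,\, u \;|\; x_{\,2} \,,\, \cdots \,,\, x_{\,n} \,\right> \,\geq\, 0$; thus it is a positive semi-definite Hermitian form. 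Recalling that the induced $n$-norm obeys $\left\|\,u \,,\, x_{\,2} \,,\, \cdots \,,\, x_{\,n}\,\right\|^{\,2} \,=\, \left<\,u \,,\, u \;|\; x_{\,2} \,,\, \cdots \,,\, x_{\,n} \,\right>$, the desired estimate is exactly the Cauchy--Schwarz inequality for this form.

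For the main computation I would write $a \,=\, \left<\,x \,,\, x \;|\; x_{\,2} \,,\, \cdots \,,\, x_{\,n} \,\right>$, $b \,=\, \left<\,x \,,\, y \;|\; x_{\,2} \,,\, \cdots \,,\, x_{\,n} \,\right>$ and $c \,=\, \left<\,y \,,\, y \;|\; x_{\,2} \,,\, \cdots \,,\, x_{\,n} \,\right>$, so that $a,\, c \,\geq\, 0$ and, by (III), $\left<\,y \,,\, x \;|\; x_{\,2} \,,\, \cdots \,,\, x_{\,n} \,\right> \,=\, \overline{b}$. For every scalar $\lambda$, positivity gives
\[ 0 \,\leq\, \left<\,x \,-\, \lambda\,y \,,\, x \,-\, \lambda\,y \;|\; x_{\,2} \,,\, \cdots \,,\, x_{\,n} \,\right> \,=\, a \,-\, \overline{\lambda}\,b \,-\, \lambda\,\overline{b} \,+\, |\,\lambda\,|^{\,2}\,c, \]
where the expansion uses linearity in the first argument together with conjugate symmetry. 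When $c \,>\, 0$, I would choose $\lambda \,=\, b \,/\, c$; substituting collapses the right-hand side to $a \,-\, |\,b\,|^{\,2} \,/\, c$, and rearranging yields $|\,b\,|^{\,2} \,\leq\, a\,c$, which is the claim after taking square roots.

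The one point needing separate care is the degenerate case $c \,=\, 0$, which by property (I) means precisely that $y,\, x_{\,2},\, \cdots,\, x_{\,n}$ are linearly dependent, i.e. $\left\|\,y \,,\, x_{\,2} \,,\, \cdots \,,\, x_{\,n}\,\right\| \,=\, 0$, so the right-hand side of the inequality vanishes. Here the displayed relation reduces to $0 \,\leq\, a \,-\, 2\,\textnormal{Re}\,(\,\lambda\,\overline{b}\,)$ for all $\lambda$; if $b$ were nonzero I could take $\lambda \,=\, t\,b$ with $t \,\to\, +\infty$ to drive the right-hand side to $-\infty$, a contradiction, so $b \,=\, 0$ and the inequality holds as $0 \,\leq\, 0$. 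I expect this degenerate case to be the only genuine obstacle; the nondegenerate case is the routine Cauchy--Schwarz optimization, and everything else is bookkeeping with properties (I)--(V).
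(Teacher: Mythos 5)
Your proof is correct. Note, however, that the paper does not prove this statement at all: it is quoted as a known preliminary from the cited reference of Gunawan, so there is no in-paper argument to compare against. What you give is the standard derivation — freeze $x_{\,2},\, \cdots,\, x_{\,n}$, observe via properties (I), (III), (IV), (V) that $(\,u \,,\, v\,) \mapsto \left<\,u \,,\, v \,|\, x_{\,2} \,,\, \cdots \,,\, x_{\,n}\,\right>$ is a positive semi-definite Hermitian form, and run the usual Cauchy--Schwarz optimization with $\lambda \,=\, b\,/\,c$. Your expansion of $\left<\,x \,-\, \lambda\,y \,,\, x \,-\, \lambda\,y \,|\, \cdots\,\right>$ is right, and you correctly isolate and dispose of the degenerate case $c \,=\, 0$ (where the form is only semi-definite) by the $\lambda \,=\, t\,b$, $t \,\to\, +\infty$ argument, which is exactly the point a careless proof would miss. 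Nothing further is needed.
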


\begin{theorem}\cite{Misiak}
For every n-inner product space \,$\left(\,X \,,\, \left<\,\cdot \,,\, \cdot \;|\; \cdot \,,\, \cdots \,,\, \cdot\,\right> \,\right)$,
\[\left \|\,x_{\,1} \,,\, x_{\,2} \,,\, \cdots \,,\, x_{\,n}\,\right\| \,=\, \sqrt{\left <\,x_{\,1} \,,\, x_{\,1} \;|\; x_{\,2} \,,\,  \cdots \,,\, x_{\,n}\,\right>}\] defines a n-norm for which
\[ \left <\,x \,,\, y \,|\, x_{\,2} \,,\,  \cdots \,,\, x_{\,n}\,\right> \,=\,\dfrac{\,1}{\,4}\, \left(\,\|\,x \,+\, y \,,\, x_{\,2} \,,\, \cdots \,,\, x_{\,n}\,\|^{\,2} \,-\, \|\,x \,-\, y \,,\, x_{\,2} \,,\, \cdots \,,\, x_{\,n}\,\|^{\,2}\,\right), \;\&\] 
\[\|\,x \,+\, y \,,\, x_{\,2} \,,\, \cdots \,,\, x_{\,n}\,\|^{\,2} \,+\, \|\,x \,-\, y \,,\, x_{\,2} \,,\, \cdots \,,\, x_{\,n}\,\|^{\,2}\]
\[ \,=\, 2\, \left(\,\|\,x \,,\, x_{\,2} \,,\, \cdots \,,\, x_{\,n}\,\|^{\,2} \,+\, \|\,y \,,\, x_{\,2} \,,\, \cdots \,,\, x_{\,n}\,\|^{\,2} \,\right)\] 
hold for all \,$x,\, y,\, x_{\,1},\, x_{\,2},\, \cdots,\, x_{\,n} \,\in\, X$.
\end{theorem}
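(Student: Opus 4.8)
The plan is to split the statement into two essentially independent tasks: verifying that $N(x_1,\ldots,x_n):=\sqrt{\langle x_1,x_1\mid x_2,\ldots,x_n\rangle}$ obeys the four defining axioms of an $n$-norm, and then deriving the two displayed identities. Note first that $N$ is well defined, since axiom (I) of the $n$-inner product guarantees that the radicand $\langle x_1,x_1\mid x_2,\ldots,x_n\rangle$ is real and nonnegative, so the square root makes sense.

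For the $n$-norm axioms I would argue as follows. Axiom (I) is immediate from axiom (I) of the inner product, because $N=0$ exactly when $\langle x_1,x_1\mid x_2,\ldots,x_n\rangle=0$, i.e.\ exactly when $x_1,\ldots,x_n$ are linearly dependent. Axiom (III) (absolute homogeneity in the first slot) follows from linearity (IV) and conjugate symmetry (III) of the inner product, which give $\langle\alpha x_1,\alpha x_1\mid\cdots\rangle=|\alpha|^2\langle x_1,x_1\mid\cdots\rangle$. Axiom (IV) (the triangle inequality) is where the previously stated Cauchy--Schwarz inequality enters: expanding $N(x+y,x_2,\ldots,x_n)^2$ by additivity (V) gives $N(x,\ldots)^2+2\,\mathrm{Re}\,\langle x,y\mid\cdots\rangle+N(y,\ldots)^2$, and bounding $|\langle x,y\mid\cdots\rangle|\le N(x,\ldots)\,N(y,\ldots)$ collapses this to $\bigl(N(x,\ldots)+N(y,\ldots)\bigr)^2$.

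The two displayed formulas are then pure sesquilinear bookkeeping. Writing $\|u\|$ for $N(u,x_2,\ldots,x_n)$ and expanding $\|x\pm y\|^2=\langle x\pm y,\,x\pm y\mid\cdots\rangle$ via (V), (IV) and (III) produces $\|x\|^2\pm\bigl(\langle x,y\mid\cdots\rangle+\langle y,x\mid\cdots\rangle\bigr)+\|y\|^2$. Adding the two expansions cancels the cross terms and yields the parallelogram law, while subtracting them leaves four times the symmetric cross term and yields the polarization formula (exactly over $\mathbb{R}$; in the complex case the right-hand side recovers $\mathrm{Re}\,\langle x,y\mid\cdots\rangle$, so the identity is read accordingly).

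The step I expect to be the genuine obstacle is axiom (II), the invariance of $N$ under \emph{all} permutations of $x_1,\ldots,x_n$. Invariance under permutations that fix the first entry is free, being precisely axiom (II) of the inner product applied to the parameter slots $x_2,\ldots,x_n$. What is not delivered by sesquilinearity alone is the exchange of the active entry with a parameter entry, i.e.\ moving some $x_j$ into the repeated first slot while $x_1$ takes its place among the parameters, which amounts to $\langle x_1,x_1\mid x_2,\ldots,x_n\rangle=\langle x_j,x_j\mid\cdots\rangle$ after the swap. I would obtain this from the full permutation symmetry of Misiak's $n$-inner product, of which the quoted axiom (II) is only the restriction to the parameter positions; the essential case reduces to the two-entry symmetry $\langle x,x\mid z\rangle=\langle z,z\mid x\rangle$ underlying the whole construction, and it is exactly here that one must appeal to that symmetry rather than to the linearity and conjugate-symmetry axioms.
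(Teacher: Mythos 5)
The paper does not prove this statement at all: it is quoted verbatim from Misiak's paper (reference \cite{Misiak}) as a preliminary, so there is no in-paper proof to compare against. Judged on its own, your argument is the standard one and is sound: nonnegativity of the radicand from axiom (I), the norm axioms (I) and (III) by direct computation, the triangle inequality via the Cauchy--Schwarz inequality already recorded in the paper, and the polarization and parallelogram identities by expanding $\left\|\,x \,\pm\, y \,,\, x_{\,2} \,,\, \cdots \,,\, x_{\,n}\,\right\|^{\,2}$ sesquilinearly. Your two caveats are exactly the right ones to flag. First, the permutation axiom (II) of the $n$-inner product as reproduced in this paper only permutes the parameter slots $x_{\,2},\, \cdots,\, x_{\,n}$, which is strictly too weak to yield norm axiom (II) (full symmetry in all $n$ arguments); one genuinely needs the stronger symmetry $\left<\,x \,,\, x \,|\, z \,,\, \cdots\,\right> \,=\, \left<\,z \,,\, z \,|\, x \,,\, \cdots\,\right>$, which is an axiom in Misiak's original definition and is not derivable from the five properties listed here, so ``appealing to it'' is the correct (and only) move. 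Second, your observation that the displayed polarization formula recovers only $\mathrm{Re}\,\left<\,x \,,\, y \,|\, \cdots\,\right>$ over $\mathbb{C}$ is accurate; the identity as printed is literally valid only for real scalars. Neither point is a gap in your reasoning --- both are imprecisions inherited from the paper's statement of the definitions.
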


\begin{definition}\cite{Aleksander}
Let \,$\left(\,X \,,\, \left<\,\cdot \,,\, \cdot \;|\; \cdot \,,\, \cdots \,,\, \cdot\,\right> \,\right)$\, be a \,$n$-inner product space and \,$\{\,e_{\,i}\,\}^{\,n}_{\,i \,=\, 1}$\; be linearly independent vectors in \,$X$.\;Then for a given set \,$F \,=\, \left\{\,a_{\,2} \,,\, \cdots \,,\, a_{\,n}\,\right\} \,\subseteq\, X$, if 
\[\left<\,e_{\,i} \,,\, e_{\,j} \,|\, a_{\,2} \,,\, \cdots \,,\, a_{\,n}\,\right> \,=\, \delta_{\,i\,j} \hspace{0.5cm} i,\; j \;\in\; \{\,1,\, 2,\, \cdots,\,n\,\}\]
\[\text{where}, \hspace{1.0cm} \delta_{\,i\,j} \;\;=\;\; \begin{cases}
1 & \text{if\;\;}\; i \;=\; j \\ 0 & \text{if\;\;}\; i \;\neq\; j \end{cases} \;, \hspace{4.5cm}\]
the family \;$\{\,e_{\,i}\,\}^{\,n}_{\,i \,=\, 1}$\; is said to be \,$F$-orthonormal.\;If an \,$F$-orthonormal set is countable, we can arrange it in the form of a sequence \;$\{\,e_{\,i}\,\}$\; and call it \,$F$-orthonormal sequence.
\end{definition}

\begin{definition}\cite{Mashadi}
A sequence \,$\{\,x_{\,k}\,\}$\; in a linear\;$n$-normed space \,$X$\, is said to be convergent to some \,$x \,\in\, X$\; if for every \,$ x_{\,2},\, \cdots,\, x_{\,n} \,\in\, X$\,  
\[\lim\limits_{k \to \infty}\,\left\|\,x_{\,k} \,-\, x \,,\, x_{\,2} \,,\, \cdots \,,\, x_{\,n} \,\right\| \,=\, 0\; \;\text{ and it is called a Cauchy sequence if}\]
$\lim\limits_{l \,,\, k \,\to\, \infty}\,\left \|\,x_{l} \,-\, x_{\,k} \,,\, x_{\,2} \,,\, \cdots \,,\, x_{\,n}\,\right\| \,=\, 0$
for every \,$ x_{\,2},\, \cdots,\, x_{\,n} \,\in\, X$.\;The space \,$X$\, is said to be complete if every Cauchy sequence in this space is convergent in \,$X$.\;A n-inner product space is called n-Hilbert space if it is complete with respect to its induce norm.
\end{definition}

\begin{note}\cite{Prasenjit}
Let \,$L_{F}$\, denote the linear subspace of \,$X$\, spanned by the non-empty finite set \,$F \,=\, \left\{\,\,a_{\,2} \,,\, a_{\,3} \,,\, \cdots \,,\, a_{\,n}\,\right\}$, where \,$a_{\,2},\, a_{\,3},\, \cdots,\, a_{\,n}$\, are fixed elements in \,$X$.\;Then the quotient space \,$ X \,/\, L_{F}$\, is a normed linear space with respect to the norm, \,$\left\|\,x \,+\, L_{F}\,\right\|_{F} \,=\, \left\|\,x \,,\, a_{\,2} \,,\,  \cdots \,,\, a_{\,n}\,\right\|$\, for every \,$x \,\in\, X$.
Let \,$M_{F}$\, be the algebraic complement of \,$L_{F}$, then \,$X \,=\, L_{F} \,\oplus\, M_{F}$.\;Define \,$\left<\,x \,,\, y\,\right>_{F} \,=\, \left<\,x \,,\, y \;|\; a_{\,2} \,,\,  \cdots \,,\, a_{\,n}\,\right>\; \;\text{on}\; \;X$.\;Then \,$\left<\,\cdot \,,\, \cdot\,\right>_{F}$\, is a semi-inner product on \,$X$\, and this semi-inner product induces an inner product on \,$X \,/\, L_{F}$\, which is given by
\[\left<\,x \,+\, L_{F} \,,\, y \,+\, L_{F}\,\right>_{F} \,=\, \left<\,x \,,\, y\,\right>_{F} \,=\, \left<\,x \,,\, y \,|\, a_{\,2} \,,\,  \cdots \,,\, a_{\,n} \,\right>\;\; \;\forall \;\; x,\, y \,\in\, X.\]
By identifying \,$ X \,/\, L_{F}$\; with \,$M_{F}$\; in an obvious way, we obtain an inner product on \,$M_{F}$.\;Now for every \,$x \,\in\, M_{F}$, we define \,$\|\,x\,\|_{F} \;=\; \sqrt{\left<\,x \,,\, x \,\right>_{F}}$\, and it can be easily verify that \,$\left(\,M_{F} \,,\, \|\,\cdot\,\|_{F}\,\right)$\; is a norm space.\;Let \,$X_{F}$\; be the completion of the inner product space \,$M_{F}$.
\end{note}

For the remaining part of this paper, \,$\left(\,X \,,\, \left<\,\cdot \,,\, \cdot \,|\, \cdot \,,\, \cdots \,,\, \cdot\,\right> \,\right)$\; is consider to be a \,$n$-Hilbert space and \,$I$\, will denote the identity operator on \,$X_{F}$. 

\begin{definition}\label{def0.1}\cite{Prasenjit}
A sequence \,$\left\{\,f_{\,i}\,\right\}^{\,\infty}_{\,i \,=\, 1}$\, in \,$X$\, is said to be a frame associated to \,$\left(\,a_{\,2},\, \cdots,\, a_{\,n}\,\right)$\, for \,$X$\, if there exist constants \,$0 \,<\, A \,\leq\, B \,<\, \infty$\, such that 
\[ A \, \left\|\,f \,,\, a_{\,2} \,,\, \cdots \,,\, a_{\,n} \,\right\|^{\,2} \,\leq\, \sum\limits^{\infty}_{i \,=\, 1}\,\left|\,\left<\,f \,,\, f_{\,i} \,|\, a_{\,2} \,,\, \cdots \,,\, a_{\,n}\,\right>\,\right|^{\,2} \,\leq\, B\, \left\|\,f \,,\, a_{\,2} \,,\, \cdots \,,\, a_{\,n}\,\right\|^{\,2}\]
for all \,$f \,\in\, X$.\;The constants \,$A$\, and \,$B$\, are called the frame bounds.\;If the sequence \,$\left\{\,f_{\,i}\,\right\}^{\,\infty}_{\,i \,=\, 1}$\; satisfies the inequality 
\[\sum\limits^{\infty}_{i \,=\, 1}\,\left|\,\left<\,f \,,\, f_{\,i} \,|\, a_{\,2} \,,\, \cdots \,,\, a_{\,n}\,\right>\,\right|^{\,2} \,\leq\, B\; \left\|\,f \,,\, a_{\,2} \,,\, \cdots \,,\, a_{\,n}\,\right\|^{\,2}\; \;\forall\; f \,\in\, X\] is called a Bessel sequence associated to \,$\left(\,a_{\,2},\, \cdots,\, a_{\,n}\,\right)$\, in \,$X$\, with bound \,$B$.
\end{definition}

\begin{theorem}\label{th2}\cite{Prasenjit}
Let \,$\left\{\,f_{\,i}\,\right\}^{\,\infty}_{\,i \,=\, 1}$\, be a sequence in \,$X$.\,Then \,$\left\{\,f_{\,i}\,\right\}^{\,\infty}_{\,i \,=\, 1}$\, is a frame associated to \,$\left(\,a_{\,2},\, \cdots,\, a_{\,n}\,\right)$\; with bounds \,$A \;\;\&\;\; B$\; if and only if it is a frame for the Hilbert space \,$X_{F}$\; with bounds \,$A \;\;\&\;\; B$.
\end{theorem}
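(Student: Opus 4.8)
The plan is to transport the frame inequalities across the identification between the $n$-inner product structure on $X$ relative to $F \,=\, \{\,a_{\,2}, \ldots, a_{\,n}\,\}$ and the genuine inner product structure on the Hilbert space $X_{F}$, exactly as set up in the Note preceding Definition \ref{def0.1}. The first thing I would record are the identities
\[
\left<\,f \,,\, g \,|\, a_{\,2} \,,\, \cdots \,,\, a_{\,n}\,\right> \,=\, \left<\,f \,,\, g\,\right>_{F} \quad \text{and} \quad \left\|\,f \,,\, a_{\,2} \,,\, \cdots \,,\, a_{\,n}\,\right\| \,=\, \|\,f\,\|_{F},
\]
which hold once $f,\, g$ are viewed in $X \,/\, L_{F} \,\cong\, M_{F} \,\subseteq\, X_{F}$. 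Under this identification each $f_{\,i}$ becomes an element of $M_{F} \,\subseteq\, X_{F}$, so the defining inequality of a frame associated to $(\,a_{\,2}, \ldots, a_{\,n}\,)$ becomes verbatim the defining inequality of a frame for $X_{F}$, the only difference being that the former ranges over $f \,\in\, X$ (equivalently over $M_{F}$) while the latter must hold on all of $X_{F}$.

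For the reverse implication I would take an arbitrary $f \,\in\, X$ and split it as $f \,=\, f_{L} \,+\, f_{M}$ with $f_{L} \,\in\, L_{F}$ and $f_{M} \,\in\, M_{F}$, using $X \,=\, L_{F} \,\oplus\, M_{F}$. Since $f_{L}$ lies in the span of $a_{\,2}, \ldots, a_{\,n}$, the vectors $f_{L},\, a_{\,2}, \ldots, a_{\,n}$ are linearly dependent, so property (I) of the $n$-inner product gives $\|\,f_{L}, a_{\,2}, \ldots, a_{\,n}\,\| \,=\, 0$, and Cauchy--Schwarz for the semi-inner product $\left<\,\cdot \,,\, \cdot\,\right>_{F}$ then forces $\left<\,f_{L} \,,\, f_{\,i} \,|\, a_{\,2}, \ldots, a_{\,n}\,\right> \,=\, 0$ for every $i$. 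Consequently $\left<\,f \,,\, f_{\,i} \,|\, a_{\,2}, \ldots, a_{\,n}\,\right> \,=\, \left<\,f_{M} \,,\, f_{\,i}\,\right>_{F}$ and $\|\,f, a_{\,2}, \ldots, a_{\,n}\,\| \,=\, \|\,f_{M}\,\|_{F}$. Applying the $X_{F}$-frame inequalities to $f_{M} \,\in\, M_{F} \,\subseteq\, X_{F}$ then yields precisely the frame-associated-to-$(\,a_{\,2}, \ldots, a_{\,n}\,)$ inequalities for $f$, with the same bounds $A$ and $B$.

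For the forward implication the inequalities already hold on the dense subspace $M_{F}$ of $X_{F}$, and the task is to upgrade them to all of $X_{F}$. Here the Bessel (upper) bound does the work: it shows the analysis map $f \,\mapsto\, \{\,\left<\,f \,,\, f_{\,i}\,\right>_{F}\,\}$ is a bounded operator from $M_{F}$ into $l^{\,2}\,(\,\mathbb{N}\,)$, hence extends continuously to $X_{F}$, so that $f \,\mapsto\, \sum_{i}\,|\,\left<\,f \,,\, f_{\,i}\,\right>_{F}\,|^{\,2}$ is a continuous function on $X_{F}$. Since $f \,\mapsto\, \|\,f\,\|_{F}^{\,2}$ is continuous as well and both frame inequalities hold on the dense set $M_{F}$, they pass to all of $X_{F}$ by continuity, with the bounds unchanged.

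The only genuinely delicate point is this density/continuity step in the forward direction: one must be sure the Bessel bound guarantees that the frame series defines a continuous functional on the completion, and that the vectors $f_{\,i}$ --- which a priori live in $X$ --- are consistently interpreted as their images in $X_{F}$, so that $\left<\,f \,,\, f_{\,i}\,\right>_{F}$ is meaningful for every $f \,\in\, X_{F}$. Once the identities of the first paragraph are in place, the remainder is a routine translation between the two formulations.
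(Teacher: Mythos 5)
The paper does not prove this statement itself; it is quoted as a preliminary result cited from \cite{Prasenjit}, so there is no in-paper proof to compare against. Your argument is correct and complete: the identification $\left<\,\cdot\,,\,\cdot\,\right>_{F}$ on $M_{F}\cong X/L_{F}$, the decomposition $f=f_{L}+f_{M}$ with all $F$-quantities of $f_{L}$ vanishing, and the density-plus-continuity extension from $M_{F}$ to its completion $X_{F}$ are exactly the ingredients the construction in the preceding Note is set up to provide, and this is the standard route taken in the cited source.
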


\begin{definition}\cite{Prasenjit}
Let \,$\left\{\,f_{\,i}\,\right\}_{i \,=\, 1}^{\infty}$\; be a Bessel sequence associated to \,$\left(\,a_{\,2},\, \cdots,\, a_{\,n}\,\right)$\, for \,$X$.\;Then the bounded linear operator \,$ T \,:\, l^{\,2}\,(\,\mathbb{N}\,) \,\to\, X_{F}$, defined by \,$T\,\{\,c_{i}\,\} \,=\, \sum\limits_{i \,=\, 1}^{\infty} \;c_{\,i}\,f_{\,i}$, is called  pre-frame operator and its adjoint operator \,$T\,^{\,\ast} \,:\, X_{F} \,\to\, l^{\,2}\,(\,\mathbb{N}\,)$, given by \;$T^{\,\ast}\,(\,f\,) \,=\, \left \{\, \left <\,f \,,\, f_{i} \,|\, a_{\,2},\, \cdots,\, a_{\,n}\,\right >\,\right \}_{i \,=\, 1}^{\infty} \;\forall\, f \,\in\, X_{F}$\; is called the analysis operator\,.\;The operator \,$S \,:\, X_{F} \,\to\, X_{F}$\; defined by \,$S_{F}\,(\,f\,) \,=\, T\,T^{\,\ast}\,(\,f\,) \,=\, \sum\limits^{\infty}_{i \,=\, 1}\; \left <\,f \,,\, f_{\,i} \,|\, a_{\,2},\, \cdots,\, a_{\,n} \, \right >\,f_{\,i}\, \;\forall\, f \,\in\, X_{F}$\; is called the frame operator.
\end{definition}

\begin{note}\cite{Prasenjit}
If \,$\left\{\,f_{\,i}\,\right\}_{i \,=\, 1}^{\infty}$\; is a frame associated to \,$\left(\,a_{\,2},\, \cdots,\, a_{\,n}\,\right)$\, for \,$X$, then the frame operator \,$S_{F}$\; is bounded, positive, self-adjoint and invertible.
\end{note}

%=====================================
\section{$K$-frame in $n$-Hilbert space}
%=====================================

\begin{definition}\label{def1}
Let \,$K$\, be a bounded linear operator on \,$X_{F}$.\,Then a sequence \,$\{\,f_{\,i}\,\}_{i \,=\, 1}^{\infty} \,\subseteq\, X$\, is said to be a \,$K$-frame associated to \,$\left(\,a_{\,2},\, \cdots,\, a_{\,n}\,\right)$\, for \,$X$\, if there exist constants \,$0 \,<\, A \,\leq\, B \,<\, \infty$\, such that
\[A\,\left \|\,K^{\,\ast}\,f \,,\, a_{\,2} \,,\, \cdots \,,\, a_{\,n}\,\right \|^{\,2} \,\leq\, \sum\limits_{i \,=\, 1}^{\infty}\, \left|\,\left <\,f \,,\,  f_{\,i} \,|\, a_{\,2} \,,\, \cdots \,,\, a_{\,n}\,\right >\,\right|^{\,2} \,\leq\, B\,\left\|\,f \,,\, a_{\,2} \,,\, \cdots \,,\, a_{\,n}\,\right\|^{\,2},\]
for all \,$f \,\in\, X_{F}$.\,In particular, if \,$K \,=\, I$, then by Theorem (\ref{th2}), \,$\{\,f_{\,i}\,\}_{i \,=\, 1}^{\infty}$\, is a frame associated to \,$\left(\,a_{\,2},\, \cdots,\, a_{\,n}\,\right)$\, for \,$X$.\,Obviously, every \,$K$-frame associated to \,$\left(\,a_{\,2},\, \cdots,\, a_{\,n}\,\right)$\, is a Bessel sequence associated to \,$\left(\,a_{\,2},\, \cdots,\, a_{\,n}\,\right)$\, in \,$X$.  
\end{definition}

\begin{note}
In general, the frame operator of a \,$K$-frame associated to \,$\left(\,a_{\,2},\, \cdots,\, a_{\,n}\,\right)$\, is not invertible.\,But, if \,$K \,\in\, \mathcal{B}\,(\,X_{F}\,)$\, has closed range, then \,$S_{F} \,:\, \mathcal{R}\,(\,K\,) \,\to\, S_{F}\,\left(\,\mathcal{R}\,(\,K\,)\,\right)$\, is an invertible operator.\,For \,$f \,\in\, \mathcal{R}\,(\,K\,)$, we have
\[\left\|\,f,\, a_{\,2},\, \cdots,\, a_{\,n}\,\right\|^{\,2} \,=\, \left\|\,(\,K^{\,\dagger}\,)^{\,\ast}\,K^{\,\ast}\,f,\, a_{\,2},\, \cdots,\, a_{\,n}\,\right\|^{\,2} \,\leq\, \left\|\,K^{\,\dagger}\,\right\|^{\,2}\,\left\|\,K^{\,\ast}\,f,\, a_{\,2},\, \cdots,\, a_{\,n}\,\right\|^{\,2}.\]
Therefore, if \,$\{\,f_{\,i}\,\}_{i \,=\, 1}^{\infty}$\, is a \,$K$-frame associated to \,$\left(\,a_{\,2},\, \cdots,\, a_{\,n}\,\right)$\, for \,$X$, by definition (\ref{def1}), we get
\[A\, \left\|\,K^{\,\dagger}\,\right\|^{\,-\, 2}\,\left\|\,f,\, a_{\,2},\, \cdots,\, a_{\,n}\,\right\|^{\,2} \,\leq\, \left<\,S_{F}\,f,\, f \,|\, a_{\,2},\, \cdots,\, a_{\,n}\,\right>\,\leq\, B\,\left\|\,f,\, a_{\,2},\, \cdots,\, a_{\,n}\,\right\|^{\,2}\]
and furthermore for each \,$f \,\in\, S_{F}\,\left(\,\mathcal{R}\,(\,K\,)\,\right)$, we have
\[B^{\,-\, 1}\left\|\,f,\, a_{\,2},\, \cdots,\, a_{\,n}\,\right\|^{\,2} \leq \left<\,S^{\,-\, 1}_{F}\,f,\, f \,|\, a_{\,2},\, \cdots,\, a_{\,n}\,\right> \leq A^{\,-\, 1}\left\|\,K^{\,\dagger}\,\right\|^{\,2}\left\|\,f,\, a_{\,2},\, \cdots,\, a_{\,n}\,\right\|^{\,2}\] 
\end{note}

\begin{theorem}\label{thm2}
Let \,$\left \{\,f_{\,i}\, \right \}^{\infty}_{i \,=\, 1}$\, be a K-frame associated to \,$\left(\,a_{\,2},\, \cdots,\, a_{\,n}\,\right)$\, for \,$X$\, and \,$T \,\in\, \mathcal{B}\,\left(\,X_{F}\,\right)$\, with \,$\mathcal{R}\,(\,T\,) \,\subset\, \mathcal{R}\,(\,K\,)$.\;Then \,$\left \{\,f_{\,i}\, \right \}^{\infty}_{i \,=\, 1}$\; is a T-frame associated to \,$\left(\,a_{\,2},\, \cdots,\, a_{\,n}\,\right)$\, for \,$X$.
\end{theorem}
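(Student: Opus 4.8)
The plan is to establish the two defining inequalities of a $T$-frame separately, observing that only the lower bound requires any work. Since $\left\{\,f_{\,i}\,\right\}_{i \,=\, 1}^{\infty}$ is a $K$-frame associated to $\left(\,a_{\,2},\, \cdots,\, a_{\,n}\,\right)$, it is in particular a Bessel sequence with upper bound $B$, and the Bessel inequality $\sum_{i} \left|\,\left<\,f \,,\, f_{\,i} \,|\, a_{\,2} \,,\, \cdots \,,\, a_{\,n}\,\right>\,\right|^{\,2} \,\leq\, B\,\left\|\,f \,,\, a_{\,2} \,,\, \cdots \,,\, a_{\,n}\,\right\|^{\,2}$ makes no reference to $K$ whatsoever; hence the identical upper bound serves for the $T$-frame. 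Everything therefore reduces to producing a positive constant $A^{\,\prime}$ for which $A^{\,\prime}\,\left\|\,T^{\,\ast}\,f \,,\, a_{\,2} \,,\, \cdots \,,\, a_{\,n}\,\right\|^{\,2} \,\leq\, \sum_{i} \left|\,\left<\,f \,,\, f_{\,i} \,|\, a_{\,2} \,,\, \cdots \,,\, a_{\,n}\,\right>\,\right|^{\,2}$ holds for all $f \,\in\, X_{F}$.

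For the lower bound, the key idea is to transfer the problem into the Hilbert space $X_{F}$, where Douglas' factorization theorem is available. Recalling from the Note preceding this section that for any $g \,\in\, X_{F}$ one has $\left\|\,g \,,\, a_{\,2} \,,\, \cdots \,,\, a_{\,n}\,\right\| \,=\, \left\|\,g\,\right\|_{F}$, and that $K$ and $T$ are genuine bounded operators on $X_{F}$, I would apply Theorem (\ref{th1}) with $U \,=\, T$ and $V \,=\, K$: the hypothesis $\mathcal{R}\,(\,T\,) \,\subseteq\, \mathcal{R}\,(\,K\,)$ furnishes an operator $W \,\in\, \mathcal{B}\,(\,X_{F}\,)$ with $T \,=\, K\,W$, and consequently $T^{\,\ast} \,=\, W^{\,\ast}\,K^{\,\ast}$.

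The decisive estimate is then, for every $f \,\in\, X_{F}$,
\[\left\|\,T^{\,\ast}\,f \,,\, a_{\,2} \,,\, \cdots \,,\, a_{\,n}\,\right\|^{\,2} \,=\, \left\|\,W^{\,\ast}\,K^{\,\ast}\,f\,\right\|_{F}^{\,2} \,\leq\, \left\|\,W^{\,\ast}\,\right\|^{\,2}\,\left\|\,K^{\,\ast}\,f\,\right\|_{F}^{\,2} \,=\, \left\|\,W\,\right\|^{\,2}\,\left\|\,K^{\,\ast}\,f \,,\, a_{\,2} \,,\, \cdots \,,\, a_{\,n}\,\right\|^{\,2},\]
using the boundedness of $W^{\,\ast}$ together with $\left\|\,W^{\,\ast}\,\right\| \,=\, \left\|\,W\,\right\|$. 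Combining this with the lower $K$-frame inequality, which rearranges to $\left\|\,K^{\,\ast}\,f \,,\, a_{\,2} \,,\, \cdots \,,\, a_{\,n}\,\right\|^{\,2} \,\leq\, A^{\,-\, 1}\,\sum_{i} \left|\,\left<\,f \,,\, f_{\,i} \,|\, a_{\,2} \,,\, \cdots \,,\, a_{\,n}\,\right>\,\right|^{\,2}$, yields
\[\dfrac{A}{\left\|\,W\,\right\|^{\,2}}\,\left\|\,T^{\,\ast}\,f \,,\, a_{\,2} \,,\, \cdots \,,\, a_{\,n}\,\right\|^{\,2} \,\leq\, \sum\limits_{i \,=\, 1}^{\infty}\,\left|\,\left<\,f \,,\, f_{\,i} \,|\, a_{\,2} \,,\, \cdots \,,\, a_{\,n}\,\right>\,\right|^{\,2},\]
so that $A^{\,\prime} \,=\, A \,/\, \left\|\,W\,\right\|^{\,2}$ is the required lower $T$-frame bound; the degenerate case $\left\|\,W\,\right\| \,=\, 0$ forces $T \,=\, K\,W \,=\, 0$, whence the left side vanishes identically and any $A^{\,\prime} \,>\, 0$ works.

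The main obstacle is conceptual rather than computational: one must be careful in passing between the $n$-inner-product formulation on $X$ and the associated Hilbert space $X_{F}$ so that Douglas' theorem genuinely applies to $T$ and $K$, and one must verify that the $n$-norm of $W^{\,\ast}\,K^{\,\ast}\,f$ is dominated by $\left\|\,W\,\right\|$ times the $n$-norm of $K^{\,\ast}\,f$. Once the identification $\left\|\,g \,,\, a_{\,2} \,,\, \cdots \,,\, a_{\,n}\,\right\| \,=\, \left\|\,g\,\right\|_{F}$ is invoked, the remaining inequalities are entirely routine.
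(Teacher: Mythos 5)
Your proposal is correct and follows essentially the same route as the paper: both reduce the problem to comparing $\left\|\,T^{\,\ast}\,f \,,\, a_{\,2} \,,\, \cdots \,,\, a_{\,n}\,\right\|^{\,2}$ with $\left\|\,K^{\,\ast}\,f \,,\, a_{\,2} \,,\, \cdots \,,\, a_{\,n}\,\right\|^{\,2}$ via Douglas' factorization theorem (Theorem (\ref{th1})) and then invoke the lower $K$-frame inequality, while the upper bound is unchanged. The only cosmetic difference is that you use the equivalent condition (III) of that theorem (writing $T \,=\, K\,W$ and estimating by $\left\|\,W\,\right\|^{\,2}$) where the paper uses condition (II) (the operator inequality $T\,T^{\,\ast} \,\leq\, \lambda^{\,2}\,K\,K^{\,\ast}$), yielding the same constant up to the identification $\lambda \,=\, \left\|\,W\,\right\|$.
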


\begin{proof} 
Suppose \,$\left \{\,f_{\,i}\,\right \}^{\infty}_{i \,=\, 1}$\, is a \,$K$-frame associated to \,$\left(\,a_{\,2},\, \cdots,\, a_{\,n}\,\right)$\, for \,$X$.\;Then for each \,$f \,\in\, X_{F}$, there exist constants \,$A,\, B \,>\, 0$\; such that
\[A\,\left \|\,K^{\,\ast}\,f \,,\, a_{\,2} \,,\, \cdots \,,\, a_{\,n}\,\right \|^{\,2} \,\leq\, \sum\limits_{i \,=\, 1}^{\infty}\, \left|\,\left <\,f \,,\,  f_{\,i} \,|\, a_{\,2} \,,\, \cdots \,,\, a_{\,n}\,\right >\,\right|^{\,2} \,\leq\, B\,\left \|\,f \,,\, a_{\,2} \,,\, \cdots \,,\, a_{\,n}\,\right \|^{\,2}.\]
Since \,$R\,(\,T\,) \,\subset\, R\,(\,K\,)$, by Theorem (\ref{th1}), \,$\exists\; \lambda \,>\, 0$\, such that \,$T\,T^{\,\ast} \,\leq\, \lambda^{\,2}\, K\,K^{\,\ast}$.\;Thus,
\[\dfrac{A}{\lambda^{\,2}}\,\left\|\,T^{\,\ast}\,f \,,\, a_{\,2} \,,\, \cdots \,,\, a_{\,n}\,\right\|^{\,2} \,=\, \dfrac{A}{\lambda^{\,2}}\,\left<\,T\,T^{\,\ast}\,f \,,\, f \,|\, a_{\,2} \,,\, \cdots \,,\, a_{\,n}\,\right>\]
\[\hspace{3cm} \,=\, \left<\,\dfrac{A}{\lambda^{\,2}}\,T\,T^{\,\ast}\,f \,,\, f \,|\, a_{\,2} \,,\, \cdots \,,\, a_{\,n}\,\right> \,\leq\, \left<\,A\;K\,K^{\,\ast}\,f \,,\, f \,|\, a_{\,2} \,,\, \cdots \,,\, a_{\,n} \,\right>\]
\[\hspace{2cm} \,=\, A\,\left \|\,K^{\,\ast}\,f \,,\, a_{\,2} \,,\, \cdots \,,\, a_{\,n}\,\right \|^{\,2}.\;\text{Therefore, for each \,$f \,\in\, X_{F}$,}\]
\[\dfrac{A}{\lambda^{\,2}}\left\|\,T^{\,\ast}\,f \,,\, a_{\,2} \,,\, \cdots \,,\, a_{\,n}\,\right\|^{\,2} \;\leq\; \sum\limits^{\infty}_{i \,=\, 1} \left |\,\left <\,f \,,\, f_{\,i} \,|\,  a_{\,2} \,,\, \cdots \,,\, a_{\,n} \,\right >\,\right |^{\,2} \,\leq\, B\,\left\|\,f \,,\, a_{\,2} \,,\, \cdots \,,\, a_{\,n}\,\right\|^{\,2}.\]
Hence, \,$\left \{\,f_{\,i}\,\right \}^{\infty}_{i \,=\, 1}$\; is a \,$T$-frame associated to \,$\left(\,a_{\,2},\, \cdots,\, a_{\,n}\,\right)$\, for \,$X$.
\end{proof}

\begin{theorem}\label{th6}
Let \,$\left \{\,f_{\,i}\, \right \}^{\infty}_{i \,=\, 1}$\, be a K-frame associated to \,$\left(\,a_{\,2},\, \cdots,\, a_{\,n}\,\right)$\, for \,$X$\, with bounds \,$A,\,B$\, and \,$T \,\in\, \mathcal{B}\,(\,X_{F}\,)$\, be an invertible with \,$T\,K \,=\, K\,T$, then \,$\left \{\,T\,f_{\,i}\,\right \}^{\infty}_{i \,=\, 1}$\, is a K-frame associated to \,$\left(\,a_{\,2},\, \cdots,\, a_{\,n}\,\right)$\, for \,$X$.
\end{theorem}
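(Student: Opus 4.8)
The plan is to transfer the whole computation into the Hilbert space $X_F$, where fixing the slots $(a_{\,2},\cdots,a_{\,n})$ turns the $n$-inner product into the genuine inner product $\left<\cdot,\cdot\right>_F = \left<\cdot,\cdot \,|\, a_{\,2},\cdots,a_{\,n}\right>$ and where $T$ carries a well-defined Hilbert-space adjoint $T^{\ast}$. The first step is to rewrite the frame sum for $\left\{T f_{\,i}\right\}$ using this adjoint: for each $f \in X_F$ and each $i$,
\[\left<\,f \,,\, T f_{\,i} \,|\, a_{\,2} \,,\, \cdots \,,\, a_{\,n}\,\right> \,=\, \left<\,T^{\ast} f \,,\, f_{\,i} \,|\, a_{\,2} \,,\, \cdots \,,\, a_{\,n}\,\right>,\]
so that $\sum_{i} \left|\left<f, T f_{\,i} \,|\, a_{\,2},\cdots,a_{\,n}\right>\right|^{\,2} = \sum_{i} \left|\left<T^{\ast} f, f_{\,i} \,|\, a_{\,2},\cdots,a_{\,n}\right>\right|^{\,2}$. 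This reduces everything to applying the $K$-frame inequalities for $\left\{f_{\,i}\right\}$ to the vector $T^{\ast} f$ in place of $f$.

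For the upper (Bessel) bound I would invoke the right-hand inequality for $\left\{f_{\,i}\right\}$ together with the boundedness of $T^{\ast}$: the sum is at most $B\,\left\|\,T^{\ast} f, a_{\,2}, \cdots, a_{\,n}\,\right\|^{\,2} \le B\,\|T\|^{\,2}\,\left\|\,f, a_{\,2}, \cdots, a_{\,n}\,\right\|^{\,2}$, using $\|T^{\ast}\| = \|T\|$. This yields a finite Bessel bound $B\,\|T\|^{\,2}$ with no further work.

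The lower bound is the crux. Applying the left inequality to $T^{\ast} f$ gives $\sum_{i}\left|\left<T^{\ast} f, f_{\,i}\,|\,a_{\,2},\cdots,a_{\,n}\right>\right|^{\,2} \ge A\,\left\|\,K^{\ast} T^{\ast} f, a_{\,2}, \cdots, a_{\,n}\,\right\|^{\,2}$, so the task becomes to bound $\left\|K^{\ast} T^{\ast} f, a_{\,2}, \cdots, a_{\,n}\right\|$ below by a multiple of $\left\|K^{\ast} f, a_{\,2}, \cdots, a_{\,n}\right\|$. Here both hypotheses on $T$ enter. Taking adjoints in $T K = K T$ gives $K^{\ast} T^{\ast} = (T K)^{\ast} = (K T)^{\ast} = T^{\ast} K^{\ast}$, hence $\left\|K^{\ast} T^{\ast} f, a_{\,2}, \cdots, a_{\,n}\right\| = \left\|T^{\ast} K^{\ast} f, a_{\,2}, \cdots, a_{\,n}\right\|$. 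Since $T$ is invertible, so is $T^{\ast}$, and for any $g \in X_F$ one has $\left\|g, a_{\,2}, \cdots, a_{\,n}\right\| = \left\|(T^{\ast})^{-1} T^{\ast} g, a_{\,2}, \cdots, a_{\,n}\right\| \le \left\|(T^{\ast})^{-1}\right\|\,\left\|T^{\ast} g, a_{\,2}, \cdots, a_{\,n}\right\|$; taking $g = K^{\ast} f$ and using $\left\|(T^{\ast})^{-1}\right\| = \left\|T^{-1}\right\|$ yields $\left\|T^{\ast} K^{\ast} f, a_{\,2}, \cdots, a_{\,n}\right\| \ge \left\|T^{-1}\right\|^{-1}\left\|K^{\ast} f, a_{\,2}, \cdots, a_{\,n}\right\|$.

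Combining the two halves, $\sum_{i}\left|\left<f, T f_{\,i}\,|\,a_{\,2},\cdots,a_{\,n}\right>\right|^{\,2} \ge A\,\left\|T^{-1}\right\|^{-2}\left\|K^{\ast} f, a_{\,2}, \cdots, a_{\,n}\right\|^{\,2}$, so $\left\{T f_{\,i}\right\}$ is a $K$-frame associated to $(a_{\,2},\cdots,a_{\,n})$ with bounds $A\,\left\|T^{-1}\right\|^{-2}$ and $B\,\|T\|^{\,2}$. The only place where real care is needed is the lower bound: the commutation hypothesis is precisely what converts $K^{\ast} T^{\ast}$ into $T^{\ast} K^{\ast}$, so that the invertibility of $T^{\ast}$ can be used to peel off the $T^{\ast}$ from the left; without commutativity one would be left with $\left\|K^{\ast} T^{\ast} f\right\|$, which in general need not control $\left\|K^{\ast} f\right\|$ from below.
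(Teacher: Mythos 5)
Your proof is correct and follows essentially the same route as the paper: rewrite the sum via $T^{\ast}$, get the upper bound from $\left\|T^{\ast}\right\| = \left\|T\right\|$, and for the lower bound use $T K = K T$ to turn $K^{\ast} T^{\ast}$ into $T^{\ast} K^{\ast}$ and then peel off $T^{\ast}$ using the invertibility estimate $\left\|K^{\ast} f, a_{\,2}, \cdots, a_{\,n}\right\| \,\leq\, \left\|T^{\,-1}\right\| \left\|T^{\ast} K^{\ast} f, a_{\,2}, \cdots, a_{\,n}\right\|$. The resulting bounds $A \left\|T^{\,-1}\right\|^{\,-2}$ and $B \left\|T\right\|^{\,2}$ coincide with those in the paper's argument.
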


\begin{proof}
Since \,$T$\, is invertible, for each \,$f \,\in\, X_{F}$, 
\[ \left\|\,K^{\,\ast}\,f \,,\, a_{\,2} \,,\, \cdots \,,\, a_{\,n} \,\right\|^{\,2} \,=\, \left\|\,\left (\,T^{\,-\, 1}\,\right )^{\,\ast}\,T^{\,\ast}\,K^{\,\ast}\,f \,,\, a_{\,2} \,,\, \cdots \,,\, a_{\,n}\,\right\|^{\,2}\]
\[\hspace{5cm} \,\leq\, \left\|\,\left(\,T^{\,-\, 1}\,\right )^{\,\ast}\,\right\|^{\,2} \,\left\|\,T^{\,\ast}\,K^{\,\ast}\,f \,,\, a_{\,2} \,,\, \cdots \,,\, a_{\,n}\,\right \|^{\,2}.\]
\begin{equation}\label{eq2}
\Rightarrow\; \left \|\,T^{\,-\, 1}\,\right\|^{\,-\, 2}\,\left\|\,K^{\,\ast}\,f \,,\, a_{\,2} \,,\, \cdots \,,\, a_{\,n}\,\right\|^{\,2} \,\leq\, \left \|\,T^{\,\ast}\,K^{\,\ast}\,f \,,\, a_{\,2} \,,\, \cdots \,,\, a_{\,n}\,\right\|^{\,2}.
\end{equation}
Also, since \,$\{\,f_{\,i}\,\}^{\infty}_{i \,=\, 1}$\, is a \,$K$-frame associated to \,$\left(\,a_{\,2},\, \cdots,\, a_{\,n}\,\right)$, for \,$f \,\in\, X_{F}$,  
\[\sum\limits^{\infty}_{i \,=\, 1} \,\left|\,\left <\,f \,,\, T\,f_{\,i} \,|\, a_{\,2} \,,\, \cdots \,,\, a_{\,n}\,\right >\,\right |^{\,2} \,=\, \sum\limits^{\infty}_{i \,=\, 1}\,\left|\,\left <\,T^{\,\ast}\,f \,,\, f_{\,i} \,|\, a_{\,2} \,,\, \cdots \,,\, a_{\,n}\,\right >\,\right|^{\,2}\]
\[ \,\geq\, A\, \left\|\,K^{\,\ast}\,T^{\,\ast}\,f \,,\, a_{\,2} \,,\, \cdots \,,\, a_{\,n}\,\right\|^{\,2} \,=\, A\,\left\|\,T^{\,\ast}\,K^{\,\ast}\,f \,,\, a_{\,2} \,,\, \cdots \,,\, a_{\,n} \,\right\|^{\,2}\; \left[\;\text{since}\; T\,K \,=\, K\,T\,\right]\]
\[ \geq\, A\;\left \|\,T^{\,-\, 1}\,\right\|^{\,-2}\,\left\|\, K^{\,\ast}\,f \,,\, a_{\,2} \,,\, \cdots \,,\, a_{\,n}\,\right\|^{\,2}\; \;[\;\text{using} \;(\ref{eq2})].\hspace{5cm}\]
On the other hand, for all \,$f \,\in\, X_{F}$,
\[\sum\limits^{\infty}_{i \,=\, 1}\,\left|\,\left <\,f \,,\, T\,f_{\,i} \,|\, a_{\,2} \,,\, \cdots \,,\, a_{\,n}\,\right >\,\right |^{\,2} \,=\, \sum\limits^{\infty}_{i \,=\, 1}\,\left|\,\left <\,T^{\,\ast}\,f \,,\, f_{\,i} \,|\, a_{\,2} \,,\, \cdots \,,\, a_{\,n}\,\right >\,\right|^{\,2}\hspace{1.5cm}\]
\[\hspace{2.3cm} \,\leq\, B\, \left \|\,T^{\,\ast}\,f \,,\, a_{\,2} \,,\, \cdots \,,\, a_{\,n}\,\right\|^{\,2} \,\leq\, B \,\left\|\,T\, \right \|^{\,2}\;\|\,f \,,\, a_{\,2} \,,\, \cdots \,,\, a_{\,n}\,\|^{\,2}.\]
Hence, \,$\left \{\,T\,f_{\,i}\,\right \}^{\infty}_{i \,=\, 1}$\; is a \,$K$-frame associated to \,$\left(\,a_{\,2},\, \cdots,\, a_{\,n}\,\right)$\, for \,$X$.
\end{proof}

\begin{theorem}
Let \,$\left \{\,f_{\,i}\, \right \}^{\infty}_{i \,=\, 1}$\, be a K-frame associated to \,$\left(\,a_{\,2},\, \cdots,\, a_{\,n}\,\right)$\, for \,$X$\, with bounds \,$A,\,B$\, and \,$T \,\in\, \mathcal{B}\,(\,X_{F}\,)$\, such that \,$T\,T^{\,\ast} \,=\, I$\, with \,$T \,K \,=\, K \,T$.\;Then \,$\left \{\,T\,f_{\,i}\,\right \}^{\infty}_{i \,=\, 1}$\, is a K-frame associated to \,$\left(\,a_{\,2},\, \cdots,\, a_{\,n}\,\right)$\, for \,$X$.
\end{theorem}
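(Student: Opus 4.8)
The plan is to follow the proof of Theorem \ref{th6}, the key simplification being that the hypothesis \,$T\,T^{\,\ast} \,=\, I$\, makes \,$T^{\,\ast}$\, norm-preserving on \,$X_{F}$. Concretely, for every \,$g \,\in\, X_{F}$,
\[\left\|\,T^{\,\ast}\,g \,,\, a_{\,2} \,,\, \cdots \,,\, a_{\,n}\,\right\|^{\,2} \,=\, \left<\,T\,T^{\,\ast}\,g \,,\, g \,|\, a_{\,2} \,,\, \cdots \,,\, a_{\,n}\,\right> \,=\, \left\|\,g \,,\, a_{\,2} \,,\, \cdots \,,\, a_{\,n}\,\right\|^{\,2}.\]
This single identity is what drives the estimate, and, unlike in Theorem \ref{th6}, it introduces no operator-norm factor, so the bounds \,$A$\, and \,$B$\, will be preserved verbatim.

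First I would move \,$T$\, off the frame vectors by the adjoint relation for the \,$n$-inner product on \,$X_{F}$,
\[\sum_{i \,=\, 1}^{\infty} \left|\,\left<\,f \,,\, T\,f_{\,i} \,|\, a_{\,2} \,,\, \cdots \,,\, a_{\,n}\,\right>\,\right|^{\,2} \,=\, \sum_{i \,=\, 1}^{\infty} \left|\,\left<\,T^{\,\ast}\,f \,,\, f_{\,i} \,|\, a_{\,2} \,,\, \cdots \,,\, a_{\,n}\,\right>\,\right|^{\,2},\]
and then apply the two \,$K$-frame inequalities for \,$\{\,f_{\,i}\,\}$\, to the vector \,$T^{\,\ast}f$. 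For the upper estimate the Bessel bound gives \,$B\,\|\,T^{\,\ast}f \,,\, a_{\,2} \,,\, \cdots \,,\, a_{\,n}\,\|^{\,2}$, which the displayed isometry identity immediately collapses to \,$B\,\|\,f \,,\, a_{\,2} \,,\, \cdots \,,\, a_{\,n}\,\|^{\,2}$.

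For the lower estimate the left-hand \,$K$-frame inequality gives at least \,$A\,\|\,K^{\,\ast}\,T^{\,\ast}f \,,\, a_{\,2} \,,\, \cdots \,,\, a_{\,n}\,\|^{\,2}$. Here I would invoke the commutation hypothesis through its adjoint form \,$K^{\,\ast}\,T^{\,\ast} \,=\, (\,T\,K\,)^{\,\ast} \,=\, (\,K\,T\,)^{\,\ast} \,=\, T^{\,\ast}\,K^{\,\ast}$, turning this lower quantity into \,$A\,\|\,T^{\,\ast}\,K^{\,\ast}f \,,\, a_{\,2} \,,\, \cdots \,,\, a_{\,n}\,\|^{\,2}$; a second application of the isometry identity with \,$g \,=\, K^{\,\ast}f$\, reduces it to \,$A\,\|\,K^{\,\ast}f \,,\, a_{\,2} \,,\, \cdots \,,\, a_{\,n}\,\|^{\,2}$. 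Combining the two estimates exhibits \,$\{\,T\,f_{\,i}\,\}$\, as a \,$K$-frame associated to \,$(\,a_{\,2},\, \cdots,\, a_{\,n}\,)$\, for \,$X$\, with the same bounds \,$A$\, and \,$B$. I do not expect a genuine obstacle here: the only point deserving care is the very first identity, where one must note that \,$T\,T^{\,\ast} \,=\, I$\, combined with the adjoint relation on \,$X_{F}$\, really does make \,$T^{\,\ast}$\, preserve the \,$n$-norm; once this is in hand the argument is shorter and sharper than that of Theorem \ref{th6}, since no distortion of the frame bounds occurs.
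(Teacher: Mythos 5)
Your proposal is correct and follows essentially the same route as the paper: the isometry identity $\left\|\,T^{\,\ast}g,\,a_{\,2},\,\cdots,\,a_{\,n}\,\right\|^{\,2}=\left\|\,g,\,a_{\,2},\,\cdots,\,a_{\,n}\,\right\|^{\,2}$ from $T\,T^{\,\ast}=I$, the adjoint relation to pass to $T^{\,\ast}f$, and the commutation $K^{\,\ast}T^{\,\ast}=T^{\,\ast}K^{\,\ast}$ for the lower bound. The only (harmless) difference is that you apply the isometry again in the upper estimate to keep the bound $B$ exactly, whereas the paper simply defers to the argument of Theorem \ref{th6}, which yields the cruder bound $B\,\|\,T\,\|^{\,2}$.
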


\begin{proof} 
Since \,$T\,T^{\,\ast} \,=\, I$, for \,$f \,\in\, X_{F}$, \,$\left\|\,T^{\,\ast}\,f \,,\, a_{\,2} \,,\, \cdots \,,\, a_{\,n}\,\right \|^{\,2} \,=\, \|\,f \,,\, a_{\,2} \,,\, \cdots \,,\, a_{\,n}\,\|^{\,2}$\, and this implies that \,$\left\|\,T^{\,\ast}\,K^{\,\ast}\,f \,,\, a_{\,2} \,,\, \cdots \,,\, a_{\,n} \,\right\|^{\,2} \,=\, \|\,K^{\,\ast}\,f \,,\, a_{\,2} \,,\, \cdots \,,\, a_{\,n}\,\|^{\,2}$.\;Also, since \,$\{\,f_{\,i}\,\}^{\infty}_{i \,=\, 1}$\, is a \,$K$-frame associated to \,$\left(\,a_{\,2},\, \cdots,\, a_{\,n}\,\right)$, for each \,$f \,\in\, X_{F}$, 
\[\sum\limits^{\infty}_{i \,=\, 1}\,\left|\,\left <\,f \,,\, T\, f_{\,i} \,|\, a_{\,2} \,,\, \cdots \,,\, a_{\,n}\,\right >\,\right|^{\,2} \,=\, \sum\limits^{\infty}_{i \,=\, 1}\,\left|\,\left <\,T^{\,\ast}\,f \,,\, f_{\,i} \,|\, a_{\,2} \,,\, \cdots \,,\, a_{\,n}\,\right >\,\right|^{\,2}\hspace{1.5cm}\]
\[\hspace{3cm} \,\geq\, A\, \left\|\,K^{\,\ast}\,T^{\,\ast}\,f \,,\, a_{\,2} \,,\, \cdots \,,\, a_{\,n}\,\right \|^{\,2} \,=\, A\, \left\|\,T^{\,\ast}\,K^{\,\ast}\,f \,,\, a_{\,2} \,,\, \cdots \,,\, a_{\,n} \,\right\|^{\,2}\]
\[ \,=\, A\, \left\|\,K^{\,\ast}\,f \,,\, a_{\,2} \,,\, \cdots \,,\, a_{\,n}\,\right\|^{\,2}.\hspace{2.9cm}\]
Thus, we see that \,$\left \{\,T\,f_{\,i}\,\right \}^{\infty}_{i \,=\, 1}$\, satisfies lower \,$K$-frame condition.\;Following the proof of the Theorem (\ref{th6}), it can be shown that it also satisfies upper \,$K$-frame condition and therefore it is a \,$K$-frame associated to \,$\left(\,a_{\,2},\, \cdots,\, a_{\,n}\,\right)$\, for \,$X$.
\end{proof}

\begin{theorem}\label{th7}
Let \,$\left \{\,f_{\,i}\,\right \}^{\infty}_{i \,=\, 1}$\; be a sequence in \,$X$.\;Then \,$\left \{\,f_{\,i}\,\right \}^{\infty}_{i \,=\, 1}$\, is a K-frame associated to \,$\left(\,a_{\,2},\, \cdots,\, a_{\,n}\,\right)$\, for \,$X$\, if and only if there exists a bounded linear operator \,$T \,:\, l^{\,2}\,(\,\mathbb{N}\,) \,\to\, X_{F}$\, such that \,$f_{\,i} \,=\, T\,e_{\,i}$\; and \,${\mathcal{R}}\,(\,K\,) \,\subset\, {\,\mathcal R}\,(\,T\,)$, where \,$\{\,e_{\,i}\,\}^{\infty}_{i \,=\, 1}$\; is an \,$F$-orthonormal basis for \,$l^{\,2}\,(\,\mathbb{N}\,)$.
\end{theorem}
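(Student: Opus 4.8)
The plan is to exploit the construction recorded in the preliminary Note, which realizes the $n$-inner product $\left<\,\cdot \,,\, \cdot \,|\, a_{\,2} \,,\, \cdots \,,\, a_{\,n}\,\right>$ and the $n$-norm $\left\|\,\cdot \,,\, a_{\,2} \,,\, \cdots \,,\, a_{\,n}\,\right\|$ as the genuine inner product $\left<\,\cdot \,,\, \cdot\,\right>_{F}$ and norm $\|\,\cdot\,\|_{F}$ on the Hilbert space $X_{F}$. Under this identification the $K$-frame inequality of Definition \ref{def1} becomes exactly the classical $K$-frame inequality $A\,\|\,K^{\,\ast} f\,\|_{F}^{\,2} \,\leq\, \sum_{i}|\,\left<\,f \,,\, f_{\,i}\,\right>_{F}\,|^{\,2} \,\leq\, B\,\|\,f\,\|_{F}^{\,2}$ on $X_{F}$, so the whole statement reduces to the Hilbert-space characterization of $K$-frames, which I would establish through Douglas' factorization theorem (Theorem \ref{th1}) applied to the operators $K$ and the synthesis operator $T$.

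For the necessity direction I would start from a $K$-frame $\{\,f_{\,i}\,\}$ with bounds $A,\,B$. The upper bound shows the sequence is Bessel, so the synthesis operator $T \,:\, l^{\,2}(\mathbb{N}) \,\to\, X_{F}$, $T\{\,c_{\,i}\,\} \,=\, \sum_{i} c_{\,i} f_{\,i}$, is well defined and bounded with $T e_{\,i} \,=\, f_{\,i}$; computing its adjoint with respect to $\left<\,\cdot \,,\, \cdot\,\right>_{F}$ gives the key identity $\|\,T^{\,\ast} f\,\|^{\,2} \,=\, \sum_{i}|\,\left<\,f \,,\, f_{\,i} \,|\, a_{\,2} \,,\, \cdots \,,\, a_{\,n}\,\right>\,|^{\,2} \,=\, \left<\,T\,T^{\,\ast} f \,,\, f \,|\, a_{\,2} \,,\, \cdots \,,\, a_{\,n}\,\right>$ for all $f \,\in\, X_{F}$. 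The lower $K$-frame bound then reads $A\,\left<\,K\,K^{\,\ast} f \,,\, f \,|\, \cdots\,\right> \,\leq\, \left<\,T\,T^{\,\ast} f \,,\, f \,|\, \cdots\,\right>$ for every $f$, i.e. $A\,K\,K^{\,\ast} \,\leq\, T\,T^{\,\ast}$ as operators on $X_{F}$. The implication (II)$\,\Rightarrow\,$(I) of Theorem \ref{th1}, taken with $U \,=\, K$ and $V \,=\, T$, then yields $\mathcal{R}(K) \,\subset\, \mathcal{R}(T)$, as required.

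For the sufficiency direction, given a bounded $T \,:\, l^{\,2}(\mathbb{N}) \,\to\, X_{F}$ with $T e_{\,i} \,=\, f_{\,i}$ and $\mathcal{R}(K) \,\subset\, \mathcal{R}(T)$, boundedness of $T$ (equivalently of $T^{\,\ast}$) immediately gives the Bessel estimate $\sum_{i}|\,\left<\,f \,,\, f_{\,i} \,|\, a_{\,2} \,,\, \cdots \,,\, a_{\,n}\,\right>\,|^{\,2} \,=\, \|\,T^{\,\ast} f\,\|^{\,2} \,\leq\, \|\,T\,\|^{\,2}\,\|\,f \,,\, a_{\,2} \,,\, \cdots \,,\, a_{\,n}\,\|^{\,2}$, so $B \,=\, \|\,T\,\|^{\,2}$ is an upper bound. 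For the lower bound I would feed $\mathcal{R}(K) \,\subset\, \mathcal{R}(T)$ into the implication (I)$\,\Rightarrow\,$(II) of Theorem \ref{th1} to obtain $\lambda \,>\, 0$ with $K\,K^{\,\ast} \,\leq\, \lambda^{\,2}\,T\,T^{\,\ast}$; evaluating at $f$ and invoking the same identity gives $\lambda^{\,-\,2}\,\|\,K^{\,\ast} f \,,\, a_{\,2} \,,\, \cdots \,,\, a_{\,n}\,\|^{\,2} \,\leq\, \sum_{i}|\,\left<\,f \,,\, f_{\,i} \,|\, a_{\,2} \,,\, \cdots \,,\, a_{\,n}\,\right>\,|^{\,2}$, so $A \,=\, \lambda^{\,-\,2}$ serves as a lower $K$-frame bound. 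Together these show $\{\,f_{\,i}\,\}$ is a $K$-frame associated to $\left(\,a_{\,2},\, \cdots,\, a_{\,n}\,\right)$ for $X$.

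I expect the only delicate points to be bookkeeping rather than conceptual: verifying that $\sum_{i} c_{\,i} f_{\,i}$ converges in the completion $X_{F}$, which follows from the Bessel property exactly as in the ordinary Hilbert-space setting, and confirming that the adjoint $T^{\,\ast}$ taken with respect to $\left<\,\cdot \,,\, \cdot\,\right>_{F}$ produces precisely the coefficient sequence $\{\,\left<\,f \,,\, f_{\,i} \,|\, a_{\,2} \,,\, \cdots \,,\, a_{\,n}\,\right>\,\}_{i}$. Once the translation $\|\,\cdot \,,\, a_{\,2} \,,\, \cdots \,,\, a_{\,n}\,\| \,\leftrightarrow\, \|\,\cdot\,\|_{F}$ and $\left<\,\cdot \,,\, \cdot \,|\, a_{\,2} \,,\, \cdots \,,\, a_{\,n}\,\right> \,\leftrightarrow\, \left<\,\cdot \,,\, \cdot\,\right>_{F}$ is firmly in place, both directions are a direct application of the two implications of Douglas' theorem, and no genuinely new obstacle arises from the $n$-Hilbert setting.
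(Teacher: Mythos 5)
Your proposal is correct and follows essentially the same route as the paper: reduce to the Hilbert space $X_{F}$, identify $\sum_{i}\left|\left<f,f_{i}\,|\,a_{2},\cdots,a_{n}\right>\right|^{2}$ with $\left\|T^{\ast}f\right\|^{2}$ for the pre-frame operator $T$, and apply both implications of Douglas' factorization theorem (Theorem \ref{th1}) to pass between $A\,K K^{\ast}\leq T T^{\ast}$ and $\mathcal{R}(K)\subset\mathcal{R}(T)$. The only cosmetic difference is that the paper builds the analysis operator $L$ first and sets $T=L^{\ast}$, whereas you construct the synthesis operator directly.
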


\begin{proof}
First we suppose that \,$\{\,f_{\,i}\,\}^{\infty}_{i \,=\, 1}$\, is a \,$K$-frame associated to \,$\left(\,a_{\,2},\, \cdots,\, a_{\,n}\,\right)$.\\Then, for each \,$f \,\in\, X_{F}$, there exist constants \,$A,\, B \,>\, 0$\; such that
\[A\,\left \|\,K^{\,\ast}\,f \,,\, a_{\,2} \,,\, \cdots \,,\, a_{\,n}\,\right \|^{\,2} \,\leq\, \sum\limits_{i \,=\, 1}^{\infty}\, \left|\,\left <\,f \,,\,  f_{\,i} \,|\, a_{\,2} \,,\, \cdots \,,\, a_{\,n}\,\right >\,\right|^{\,2} \,\leq\, B\,\left \|\,f \,,\, a_{\,2} \,,\, \cdots \,,\, a_{\,n}\,\right \|^{\,2}.\]
Now, we consider the linear operator \,$L \,:\, X_{F} \,\to\, l^{\,2}\,(\,\mathbb{N}\,)$\; defined by
\[ L\,(\,f\,) \,=\, \sum\limits^{\infty}_{i \,=\, 1}\,\left <\,f \,,\, f_{\,i} \,|\, a_{\,2} \,,\, \cdots \,,\, a_{\,n}\,\right > \,e_{\,i}\; \;\forall\; f \,\in\, X_{F}.\]
Since \,$\{\,e_{\,i}\,\}^{\infty}_{i \,=\, 1}$\; is an \,$F$-orthonormal basis for \,$l^{\,2}\,(\,\mathbb{N}\,)$, we can write
\[\left\|\,L\,(\,f\,)\,\right\|_{\,l^{\,2}}^{\,2} \,=\, \sum\limits^{\infty}_{i \,=\, 1}\,\left|\,\left <\,f \,,\, f_{\,i} \,|\,  a_{\,2} \,,\, \cdots \,,\, a_{\,n}\,\right >\,\right|^{\,2} \,\leq\, B\, \|\,f \,,\, a_{\,2} \,,\, \cdots \,,\, a_{\,n}\,\|^{\,2}.\]
Thus, \,$L$\, is well-defined and bounded linear operator on \,$X_{F}$.\;So, the adjoint operator \,$L^{\,\ast} \,:\, l^{\,2}\,(\,\mathbb{N}\,) \,\to\, X_{F}$\, exists and then for each \,$f \,\in\, X_{F}$, we get 
\[\left <\,L^{\,\ast}\,e_{\,i}\, \,,\, f \,|\, a_{\,2} \,,\, \cdots \,,\, a_{\,n}\,\right > \,=\, \left <\,e_{\,i}\, \,,\, L\,(\,f\,) \;|\; a_{\,2} \,,\, \cdots \,,\, a_{\,n}\,\right>\hspace{4cm}\]
\[\hspace{2cm} \;=\, \left <\,e_{\,i}\, \,,\, \sum\limits^{\infty}_{i \,=\, 1}\,\left <\,f \,,\, f_{\,i} \,|\, a_{\,2} \,,\, \cdots \,,\, a_{\,n}\,\right > \,e_{\,i} \,|\, a_{\,2} \,,\, \cdots \,,\, a_{\,n}\,\right >\]
\[\hspace{1.56cm}\,=\, \overline{\left <\,f \,,\, f_{\,i} \,|\, a_{\,2} \,,\, \cdots \,,\, a_{\,n}\;\right >} \,=\, \left <\,f_{\,i} \,,\, f \,|\, a_{\,2} \,,\, \cdots \,,\, a_{\,n}\,\right >.\]
The above calculation shows that, \,$L^{\,\ast}\,(\,e_{\,i}\,) \,=\, f_{\,i}$.\;Also, from the definition (\ref{def1}), we get \,$A\, \|\,K^{\,\ast}\,f\,\|_{F}^{\,2} \,\leq\, \left\|\,L\,(\,f\,)\,\right \|_{\,l^{\,2}}^{\,2}$\, and this implies that
\[\left<\,A\,K\,K^{\,\ast}\,f \,,\, f \,|\, a_{\,2} \,,\, \cdots \,,\, a_{\,n}\,\right> \,\leq\, \left<\,L^{\,\ast}\,L\,f \,,\, f \,|\, a_{\,2} \,,\, \cdots \,,\, a_{\,n}\,\right>\]
\[ \Rightarrow\; A \,K\,K^{\,\ast} \,\leq\, T\,T^{\,\ast},\; \text{where}\;\; T \,=\, L^{\,\ast}\hspace{3.9cm}\] and hence from the  Theorem (\ref{th1}), \,${\,\mathcal R}\,(\,K\,) \,\subset\, {\,\mathcal R}\,(\,T\,)$.\\

\text{Conversely}, suppose that \,$T \,:\, l^{\,2}\,(\,\mathbb{N}\,) \,\to\, X_{F}$\; be a bounded linear operator such that \,$f_{\,i} \,=\, T\,e_{\,i}$\; and \,${\,\mathcal R}\,(\,K\,) \,\subset\, {\,\mathcal R}\,(\,T\,)$.\;We have to show that \,$\{\,f_{\,i}\,\}_{i \,=\, 1}^{\infty}$\; is a \,$K$-frame associated to \,$\left(\,a_{\,2},\, \cdots,\, a_{\,n}\,\right)$.\;Let \,$g \,\in\, l^{\,2}\,(\,\mathbb{N}\,)$\, then \,$g \,=\, \sum\limits^{\infty}_{i \,=\, 1}\,c_{\,i}\,e_{\,i}$, where \,$c_{\,i} \,=\, \left<\,g \,,\, e_{\,i} \,|\, a_{\,2} \,,\, \cdots \,,\, a_{\,n}\,\right>$.\;Now, for all \,$g \,\in\, l^{\,2}\,(\,\mathbb{N}\,)$, we have 
\[\left<\,T^{\,\ast}\,f,\, g \,|\, a_{\,2},\, \cdots,\, a_{\,n}\,\right> \,=\, \left<\,T^{\,\ast}\,f,\, \sum\limits^{\infty}_{i \,=\, 1}\,c_{\,i}\,e_{\,i} \,|\, a_{\,2},\, \cdots,\, a_{\,n}\,\right>\]
\[\hspace{2cm} \,=\, \sum\limits^{\infty}_{i \,=\, 1}\, \overline{c_{\,i}}\,\left<\,f,\, T\,e_{\,i} \,|\, a_{\,2},\, \cdots,\, a_{\,n}\,\right>\,=\, \sum\limits^{\infty}_{i \,=\, 1}\,\overline{\,c_{\,i}}\,\left<\,f \,,\, f_{\,i} \,|\, a_{\,2} \,,\, \cdots \,,\, a_{\,n}\,\right>\] 
\[\,=\, \sum\limits^{\infty}_{i \,=\, 1}\,\overline{\left<\,g \,,\, e_{\,i} \,|\, a_{\,2} \,,\, \cdots \,,\, a_{\,n}\,\right>}\,\left<\,f \,,\, f_{\,i} \,|\, a_{\,2} \,,\, \cdots \,,\, a_{\,n}\,\right>\hspace{.4cm}\]
\[ \,=\, \sum\limits^{\infty}_{i \,=\, 1}\,\left<\,e_{\,i} \,,\, g  \,|\, a_{\,2} \,,\, \cdots \,,\, a_{\,n}\,\right> \,\left<\,f \,,\, f_{\,i} \,|\, a_{\,2} \,,\, \cdots \,,\, a_{\,n}\,\right>\hspace{.4cm}\]
\[\hspace{.3cm} \;=\; \left<\,\sum\limits^{\infty}_{i \,=\, 1}\,\left<\,f \,,\, f_{\,i} \,|\, a_{\,2} \,,\, \cdots \,,\, a_{\,n}\,\right>\,e_{\,i} \;,\; g  \,|\, a_{\,2} \,,\, \cdots \,,\, a_{\,n}\,\right>.\] 
\[\Rightarrow\, T^{\,\ast}\,(\,f\,) \;=\; \sum\limits^{\infty}_{i \,=\, 1}\,\left<\,f \,,\, f_{\,i} \,|\, a_{\,2} \,,\, \cdots \,,\, a_{\,n}\,\right>\,e_{\,i}\; \;\forall\; f \,\in\, X_{F}.\]
Thus, for all \,$f \,\in\, X_{F}$, \,$\sum\limits^{\infty}_{i \,=\, 1}\,\left|\,\left <\,f \,,\, f_{\,i} \,|\,  a_{\,2} \,,\, \cdots \,,\, a_{\,n}\,\right >\,\right |^{\,2}$
\[\,=\, \sum\limits^{\infty}_{i \,=\, 1}\,\left |\,\left <\,f \,,\, T\,e_{\,i} \,|\, a_{\,2} \,,\, \cdots \,,\, a_{\,n}\,\right >\,\right |^{\,2} \,=\, \sum\limits^{\infty}_{i \,=\, 1}\,\left|\,\left <\,T^{\,\ast}\,f \,,\, e_{\,i} \,|\,  a_{\,2} \,,\, \cdots \,,\, a_{\,n}\,\right >\,\right |^{\,2}\]
\[\hspace{.5cm} \,=\, \left \|\,T^{\,\ast}\,f \,,\,  a_{\,2} \,,\, \cdots \,,\, a_{\,n}\,\right \|^{\,2}\;[\;\text{since \,$\{\,e_{\,i}\,\}^{\infty}_{i \,=\, 1}$\; is an \,$F$-orthonormal basis}\;]\]
\[\,\leq\, \left \|\,T^{\,\ast}\,\right \|^{\,2}\,\|\,f \,,\, a_{\,2} \,,\, \cdots \,,\, a_{\,n}\,\|^{\,2} \,=\, \left \|\,T\,\right \|^{\,2}\,\|\,f \,,\, a_{\,2} \,,\, \cdots \,,\, a_{\,n}\,\|^{\,2}\hspace{1.5cm}\] 
\[\Rightarrow\, \sum\limits^{\infty}_{i \,=\, 1}\,\left |\,\left <\,f \,,\, f_{\,i} \,|\, a_{\,2} \,,\, \cdots \,,\, a_{\,n}\,\right >\,\right |^{\,2} \,\leq\, \left \|\,T\,\right \|^{\,2}\, \|\,f \,,\, a_{\,2} \,,\, \cdots \,,\, a_{\,n} \,\|^{\,2}\; \;\forall\; f \,\in\, X_{F}.\]
Thus, \,$\{\,f_{\,i}\,\}^{\infty}_{i \,=\, 1}$\; is a Bessel sequence associated to \,$\left(\,a_{\,2},\, \cdots,\, a_{\,n}\,\right)$.\;Since \,${\,\mathcal R}\,(\,K\,) \,\subset\, {\,\mathcal R}\,(\,T\,)$, from Theorem (\ref{th1}), there exists \,$A \,>\, 0$\; such that \,$A\,K\,K^{\,\ast} \,\leq\, T\,T^{\,\ast}$.\;Hence following the proof of the Theorem (\ref{thm2}), for all \,$f \,\in\, X_{F}$ 
\[A\,\|\,K^{\,\ast}\,f,\, a_{\,2},\, \cdots,\, a_{\,n}\,\|^{\,2} \,\leq\, \left\|\,T^{\,\ast}\,f,\, a_{\,2},\, \cdots,\, a_{\,n}\,\right\|^{\,2} \,=\, \sum\limits^{\infty}_{i \,=\, 1}\,\left | \,\left <\,f \,,\, f_{\,i} \,|\, a_{\,2},\, \cdots, a_{\,n}\,\right >\,\right |^{\,2}.\]
Hence, \,$\{\,f_{\,i}\,\}^{\infty}_{i \,=\, 1}$\; is a \,$K$-frame associated to \,$\left(\,a_{\,2},\, \cdots,\, a_{\,n}\,\right)$\, for \,$X$.
\end{proof}

\begin{theorem}\label{th8}
Let \,$\{\,f_{\,i}\,\}^{\infty}_{i=1}\, \;\text{and}\; \,\{\,g_{\,i}\,\}^{\infty}_{i \,=\, 1}$\; be K-frames associated to \,$\left(\,a_{\,2},\, \cdots,\, a_{\,n}\,\right)$\, for \,$X$\; with the corresponding pre frame operators \,$T$\, and \,$L$, respectively.\;If \,$T\,L^{\,\ast}$\, and \,$L\,T^{\,\ast}$\, are positive operators, then  \,$\{\,f_{\,i} \,+\, g_{\,i}\,\}^{\infty}_{i \,=\, 1}$\, is also a K-frame associated to \,$\left(\,a_{\,2},\, \cdots,\, a_{\,n}\,\right)$\, for \,$X$.
\end{theorem}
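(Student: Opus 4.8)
The plan is to represent the summed sequence through the operator \,$T \,+\, L$\, and to read off both frame inequalities from the behaviour of the associated frame operator.\,Since the \,$F$-orthonormal basis satisfies \,$T\,e_{\,i} \,=\, f_{\,i}$\, and \,$L\,e_{\,i} \,=\, g_{\,i}$, the natural pre-frame operator of \,$\{\,f_{\,i} \,+\, g_{\,i}\,\}^{\infty}_{i \,=\, 1}$\, is \,$T \,+\, L$, whose frame operator expands as
\[
S \,=\, (\,T \,+\, L\,)\,(\,T \,+\, L\,)^{\,\ast} \,=\, T\,T^{\,\ast} \,+\, L\,L^{\,\ast} \,+\, T\,L^{\,\ast} \,+\, L\,T^{\,\ast}.
\]
The entire argument hinges on the hypothesis: since \,$T\,L^{\,\ast}$\, and \,$L\,T^{\,\ast}$\, are positive, their sum is positive, and as \,$L\,L^{\,\ast} \,\geq\, 0$\, always, one obtains the operator inequality \,$S \,\geq\, T\,T^{\,\ast}$\, in the order on \,$X_{F}$.

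First I would verify that \,$\{\,f_{\,i} \,+\, g_{\,i}\,\}$\, is a Bessel sequence, so that \,$T \,+\, L$\, is a genuine bounded pre-frame operator.\,Writing \,$B_{\,1},\, B_{\,2}$\, for the upper bounds of the two given \,$K$-frames and using the triangle inequality in \,$l^{\,2}\,(\,\mathbb{N}\,)$\, together with \,$\sum_{i} \left|\,\langle\, f \,,\, f_{\,i} \,|\, a_{\,2} \,,\, \cdots \,,\, a_{\,n}\,\rangle\,\right|^{\,2} \,=\, \|\,T^{\,\ast}\,f \,,\, a_{\,2} \,,\, \cdots \,,\, a_{\,n}\,\|^{\,2}$, this yields the upper bound \,$(\,\sqrt{B_{\,1}} \,+\, \sqrt{B_{\,2}}\,)^{\,2}$.\,For the lower bound I would compute, for each \,$f \,\in\, X_{F}$,
\[
\sum_{i \,=\, 1}^{\infty} \left|\,\langle\, f \,,\, f_{\,i} \,+\, g_{\,i} \,|\, a_{\,2} \,,\, \cdots \,,\, a_{\,n}\,\rangle\,\right|^{\,2} \,=\, \langle\, S\,f \,,\, f \,|\, a_{\,2} \,,\, \cdots \,,\, a_{\,n}\,\rangle \,\geq\, \langle\, T\,T^{\,\ast}\,f \,,\, f \,|\, a_{\,2} \,,\, \cdots \,,\, a_{\,n}\,\rangle,
\]
where the inequality is precisely the point at which positivity of the cross terms is invoked.\,The right-hand side equals \,$\sum_{i} \left|\,\langle\, f \,,\, f_{\,i} \,|\, a_{\,2} \,,\, \cdots \,,\, a_{\,n}\,\rangle\,\right|^{\,2}$, which is bounded below by \,$A_{\,1}\,\|\,K^{\,\ast}\,f \,,\, a_{\,2} \,,\, \cdots \,,\, a_{\,n}\,\|^{\,2}$\, because \,$\{\,f_{\,i}\,\}$\, is itself a \,$K$-frame with lower bound \,$A_{\,1}$.\,This delivers the lower \,$K$-frame inequality with the same constant \,$A_{\,1}$\, and completes the verification.

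The main obstacle, and the only place the hypothesis genuinely enters, is the operator inequality \,$S \,\geq\, T\,T^{\,\ast}$; without positivity of \,$T\,L^{\,\ast} \,+\, L\,T^{\,\ast}$\, this sum could be indefinite and the lower frame bound would be lost.\,A secondary bookkeeping point is the identification of \,$T\,T^{\,\ast}$\, as the frame operator of \,$\{\,f_{\,i}\,\}$\, in \,$X_{F}$, so that \,$\langle\, T\,T^{\,\ast}\,f \,,\, f \,|\, a_{\,2} \,,\, \cdots \,,\, a_{\,n}\,\rangle$\, really does reproduce the frame sum; once this is in place the estimate is immediate.\,Alternatively, one could route the lower bound through Douglas' theorem (\ref{th1}): positivity gives \,$K\,K^{\,\ast} \,\leq\, \mu^{\,2}\,T\,T^{\,\ast} \,\leq\, \mu^{\,2}\,S$\, for some \,$\mu \,>\, 0$, whence \,$\mathcal{R}\,(\,K\,) \,\subset\, \mathcal{R}\,(\,T \,+\, L\,)$\, and Theorem (\ref{th7}) applies; but the direct frame-operator estimate above is the shorter path.
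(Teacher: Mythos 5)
Your argument is correct, and its skeleton matches the paper's: both proofs identify the pre-frame operator of $\{\,f_{\,i}\,+\,g_{\,i}\,\}$ with $T\,+\,L$, expand $\left\|\,(\,T\,+\,L\,)^{\,\ast}f\,,\,a_{\,2}\,,\,\cdots\,,\,a_{\,n}\,\right\|^{\,2}$ into the four terms $\left<\,T\,T^{\,\ast}f\,,\,f\,\right>\,+\,\left<\,T\,L^{\,\ast}f\,,\,f\,\right>\,+\,\left<\,L\,T^{\,\ast}f\,,\,f\,\right>\,+\,\left<\,L\,L^{\,\ast}f\,,\,f\,\right>$, discard the cross terms by positivity, and obtain the upper bound from Minkowski's inequality. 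The one genuine divergence is the final step of the lower bound. The paper keeps $T\,T^{\,\ast}\,+\,L\,L^{\,\ast}$, invokes $\mathcal{R}\,(\,K\,)\,\subset\,\mathcal{R}\,(\,T\,)\,+\,\mathcal{R}\,(\,L\,)$ and the Fillmore--Williams result (Theorem (\ref{thm1})) to get $K\,K^{\,\ast}\,\leq\,\lambda^{\,2}\,(\,T\,T^{\,\ast}\,+\,L\,L^{\,\ast}\,)$, and ends with the non-explicit constant $1/\lambda^{\,2}$. You instead discard $L\,L^{\,\ast}\,\geq\,0$ as well, reduce to $\left<\,T\,T^{\,\ast}f\,,\,f\,\right>\,=\,\sum_{i}\left|\,\left<\,f\,,\,f_{\,i}\,|\,a_{\,2}\,,\,\cdots\,,\,a_{\,n}\,\right>\,\right|^{\,2}$, and apply the $K$-frame lower bound of $\{\,f_{\,i}\,\}$ directly. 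This is slightly more elementary (no appeal to Theorem (\ref{thm1}) or to range inclusions at all) and buys an explicit lower frame bound $A_{\,1}$ (indeed $\max\,(\,A_{\,1}\,,\,A_{\,2}\,)$ by symmetry), whereas the paper's route is the one that generalizes when neither summand alone controls $K\,K^{\,\ast}$ but their ranges jointly cover $\mathcal{R}\,(\,K\,)$ --- a generality not needed under the hypotheses of this theorem. Your parenthetical alternative via Douglas' theorem and Theorem (\ref{th7}) is also valid but, as you note, longer.
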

\begin{proof} 
Let \,$\{\,f_{\,i}\,\}^{\infty}_{i \,=\, 1}\, \;\text{and}\; \,\{\,g_{\,i}\,\}^{\infty}_{i \,=\, 1}$\; be two \,$K$-frames associated to \,$\left(\,a_{\,2},\, \cdots,\, a_{\,n}\,\right)$\, for \,$X$.\;Then by Theorem (\ref{th7}), there exist bounded linear operators \,$T\, \;\text{and}\; \,L$\; such that \,$T\,e_{\,i} \,=\, f_{\,i},\; L\,e_{\,i} \,=\, g_{\,i}\; \;\text{and}\; \,{\mathcal R}\,(\,K\,) \,\subset\, {\mathcal R}\,(\,T\,),\; {\mathcal R}\,(\,K\,) \,\subset\, {\mathcal R}\,(\,L\,)$, where \,$\{\,e_{\,i}\,\}^{\infty}_{i \,=\, 1}$\; is an \,$F$-orthonormal basis for \,$l^{\,2}\,(\,\mathbb{N}\,)$.\;Now, we have \,${\mathcal R}\,(\,K\,) \,\subset\, {\mathcal R}\,(\,T\,) \,+\, {\mathcal R}\,(\,L\,)$.\;By Theorem (\ref{thm1}), \,$K\,K^{\,\ast} \,\leq\, \lambda^{\,2}\,\left(\,T\,T^{\,\ast} \,+\, L\,L^{\,\ast}\,\right)$, for some \,$\lambda \,>\, 0$.\;Now, for \,$f \,\in\, X_{F}$, 
\[\sum\limits^{\infty}_{i \,=\, 1} \,\left|\,\left <\,f \,,\, f_{\,i} \,+\, g_{\,i} \,|\, a_{\,2} \,,\, \cdots \,,\, a_{\,n}\,\right >\,\right |^{\,2} \,=\, \sum\limits^{\infty}_{i \,=\, 1} \,\left|\,\left <\,f \,,\, T\,e_{\,i} \,+\, L\,e_{\,i} \,|\, a_{\,2} \,,\, \cdots \,,\, a_{\,n}\,\right >\,\right |^{\,2}\]
\[\;=\; \sum\limits^{\infty}_{i \,=\, 1}\,\left |\,\left <\,(\,T \,+\, L\,)^{\,\ast}\,f \,,\, e_{\,i} \,|\, a_{\,2} \,,\, \cdots \,,\, a_{\,n}\,\right >\,\right |^{\,2}\hspace{2.8cm}\]
\[\hspace{1cm} \,=\, \left\|\,\left(\,T \,+\, L\,\right)^{\,\ast}\,f \,,\, a_{\,2} \,,\, \cdots \,,\, a_{\,n}\,\right\|^{\,2}\; \;[\;\text{since}\;\{\,e_{\,i}\,\}\;\text{is F-orthonormal}\;] \]
\[  \,=\, \left <\,\left(\,T \,+\, L\,\right)^{\,\ast}\,f \,,\, \left(\,T \,+\, L\,\right)^{\,\ast}\,f \,|\, a_{\,2} \,,\, \cdots \,,\, a_{\,n}\,\right>\hspace{2.2cm}\]
\[ \,=\, \left <\,T^{\,\ast}\,f \,+\, L^{\,\ast}\,f \,,\, T^{\,\ast}\,f \,+\, L^{\,\ast}\,f \,|\, a_{\,2} \,,\, \cdots \,,\, a_{\,n}\,\right>\hspace{2.1cm}\]
\[\hspace{.2cm} \,=\, \left <\,T^{\,\ast}\,f \,,\, T^{\,\ast}\,f \,|\, a_{\,2} \,,\, \cdots \,,\, a_{\,n}\,\right > \,+\, \left <\,L^{\,\ast}\,f \,,\, T^{\,\ast}\,f \,|\, a_{\,2} \,,\, \cdots \,,\, a_{\,n}\,\right >\]
\[\hspace{2.5cm} \,+\, \left<\,T^{\,\ast}\,f \,,\, L^{\,\ast}\,f \,|\, a_{\,2} \,,\, \cdots \,,\, a_{\,n}\,\right > \,+\, \left <\,L^{\,\ast}\,f \,,\, L^{\,\ast}\, f \,|\, a_{\,2} \,,\, \cdots \,,\, a_{\,n}\,\right >\]
\[\,=\, \left <\,T\,T^{\,\ast}\,f \,,\, f \,|\, a_{\,2} \,,\, \cdots \,,\, a_{\,n}\,\right > \,+\, \left<\,T\,L^{\,\ast}\,f \,,\, f \,|\, a_{\,2} \,,\, \cdots \,,\, a_{\,n}\,\right >\hspace{.3cm}\]
\[\hspace{2.5cm} \,+\, \left <\,L\,T^{\,\ast}\,f \,,\, f \,|\, a_{\,2} \,,\, \cdots \,,\, a_{\,n}\,\right > \,+\, \left<\,L\,L^{\,\ast}\,f \,,\, f \,|\, a_{\,2} \,,\, \cdots \,,\, a_{\,n}\,\right >\]
\[\hspace{2cm} \,\geq\, \left<\,\left(\,T\,T^{\,\ast} \,+\, L\,L^{\,\ast}\,\right)\, f \,,\, f \,|\, a_{\,2} \,,\, \cdots \,,\, a_{\,n}\,\right >\; [\;\text{since}\; T\,L^{\,\ast},\, L\,T^{\,\ast}\;\text{are positive}\;]\]
\[\hspace{2cm} \,\geq\, \dfrac{1}{\lambda^{\,2}}\,\left <\,K\,K^{\,\ast}\,f \,,\, f \,|\, a_{\,2} \,,\, \cdots \,,\, a_{\,n}\,\right>\; [\;\text{since}\;K\,K^{\,\ast} \,\leq\, \lambda^{\,2}\,\left(\,T\,T^{\,\ast} \,+\, L\,L^{\,\ast}\,\right)\;]\]
\[\hspace{1.1cm}\;=\,\dfrac{1}{\lambda^{\,2}}\,\left <\,K^{\ast}\,f \,,\, K^{\,\ast}\,f \,|\, a_{\,2} \,,\, \cdots \,,\, a_{\,n}\,\right > \,=\, \dfrac{1}{\lambda^{\,2}}\,\left \|\,K^{\ast}\,f \,,\, a_{\,2} \,,\, \cdots \,,\, a_{\,n}\,\right \|^{\,2}.\]\; Therefore, for each \,$f \,\in\, X_{F}$,
\begin{equation}\label{eq4}
\dfrac{1}{\lambda^{\,2}}\,\left \|\,K^{\ast}\, f \,,\, a_{\,2} \,,\, \cdots \,,\, a_{\,n}\,\right \|^{\,2} \,\leq\, \sum\limits^{\infty}_{k \,=\, 1} \,\left|\,\left <\,f \,,\, f_{\,i} \,+\, g_{\,i} \,|\, a_{\,2} \,,\, \cdots \,,\, a_{\,n}\,\right>\,\right |^{\,2}. 
\end{equation}
On the other hand, using the Minkowski's inequality, for each \,$f \,\in\, X_{F}$, we have
\[\left(\,\sum\limits^{\infty}_{i \,=\, 1}\,\left|\,\left <\,f \,,\, f_{\,i} \,+\, g_{\,i} \,|\, a_{\,2} \,,\, \cdots \,,\, a_{\,n}\,\right >\,\right|^{\,2}\,\right)^{\dfrac{1}{2}}\]
\[\,\leq\,  \left(\,\sum\limits^{\infty}_{i \,=\, 1}\,\left|\,\left <\,f \,,\, f_{\,i} \,|\, a_{\,2} \,,\, \cdots \,,\, a_{\,n}\,\right >\,\right|^{\,2} \,\right)^{\dfrac{1}{2}} \,+\, \left(\,\sum\limits^{\infty}_{i \,=\, 1}\,\left|\,\left <\,f \,,\, g_{\,i} \,|\, a_{\,2} \,,\, \cdots \,,\, a_{\,n}\,\right >\,\right|^{\,2}\,\right)^{\dfrac{1}{2}}\hspace{1.2cm}\]
\[ \,\leq\, \sqrt{A}\,\|\,f,\, a_{\,2},\, \cdots,\, a_{\,n}\,\| \,+\, \sqrt{B}\, \|\,f,\, a_{\,2},\, \cdots,\, a_{\,n}\,\| \,=\, \left(\,\sqrt{A} \,+\, \sqrt{B}\,\right)\,\|\,f,\, a_{\,2},\, \cdots,\, a_{\,n}\,\|.\]
This implies that
\begin{equation}\label{eq5}
 \sum\limits^{\infty}_{i \,=\, 1} \,\left|\,\left <\,f \,,\, f_{\,i} \,+\, g_{\,i} \,|\, a_{\,2} \,,\, \cdots \,,\, a_{\,n}\,\right >\,\right |^{\,2} \,\leq\, \left(\,\sqrt{A} \,+\, \sqrt{B}\,\right)^{\,2}\,\|\,f \,,\, a_{\,2} \,,\, \cdots \,,\, a_{\,n}\,\|^{\,2}.
\end{equation}
From (\ref{eq4}) \,$\text{and}$\, (\ref{eq5}), \,$\{\,f_{\,i} \,+\, g_{\,i}\,\}^{\infty}_{i \,=\, 1}$\, is a \,$K$-frame associated to \,$\left(\,a_{\,2},\, \cdots,\, a_{\,n}\,\right)$\, for \,$X$.
\end{proof} 

\begin{theorem}
Let \,$\{\,f_{\,i}\,\}^{\infty}_{i \,=\, 1}$\, be K-frame associated to \,$\left(\,a_{\,2},\, \cdots,\, a_{\,n}\,\right)$\, for \,$X$\; and  \,$U \,:\, X_{F} \,\to\, X_{F}$\; be a positive operator.\;Then \,$\{\,f_{\,i} \,+\, U\,f_{\,i}\,\}^{\infty}_{i \,=\, 1}$\, is also a K-frame associated to \,$\left(\,a_{\,2},\, \cdots,\, a_{\,n}\,\right)$\, for \,$X$. 
\end{theorem}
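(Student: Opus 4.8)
The plan is to exploit the identity \,$f_{\,i} \,+\, U\,f_{\,i} \,=\, (\,I \,+\, U\,)\,f_{\,i}$, so that the new family is obtained from \,$\{\,f_{\,i}\,\}$\, by applying the single operator \,$I \,+\, U$.\;Since \,$U$\, is positive, \,$I \,+\, U$\, is positive and self-adjoint, and from \,$\left<\,(\,I \,+\, U\,)\,f \,,\, f \,|\, a_{\,2} \,,\, \cdots \,,\, a_{\,n}\,\right> \,\geq\, \left\|\,f \,,\, a_{\,2} \,,\, \cdots \,,\, a_{\,n}\,\right\|^{\,2}$\, we get \,$I \,+\, U \,\geq\, I$, so \,$I \,+\, U$\, is invertible with \,$\left\|\,(\,I \,+\, U\,)^{\,-\, 1}\,\right\| \,\leq\, 1$.\;This places the statement in the same circle of ideas as Theorem (\ref{th6}), and I would try to verify the two \,$K$-frame inequalities directly for \,$\{\,(\,I \,+\, U\,)\,f_{\,i}\,\}$, using \,$(\,I \,+\, U\,)^{\,\ast} \,=\, I \,+\, U$\, to move the operator across the \,$n$-inner product.

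For the upper (Bessel) bound I would write, for each \,$f \,\in\, X_{F}$,
\[\sum_{i \,=\, 1}^{\infty}\,\left|\,\left<\,f \,,\, (\,I \,+\, U\,)\,f_{\,i} \,|\, a_{\,2} \,,\, \cdots \,,\, a_{\,n}\,\right>\,\right|^{\,2} \,=\, \sum_{i \,=\, 1}^{\infty}\,\left|\,\left<\,(\,I \,+\, U\,)\,f \,,\, f_{\,i} \,|\, a_{\,2} \,,\, \cdots \,,\, a_{\,n}\,\right>\,\right|^{\,2},\]
and the Bessel bound \,$B$\, of \,$\{\,f_{\,i}\,\}$\, together with \,$\left\|\,(\,I \,+\, U\,)\,f \,,\, a_{\,2} \,,\, \cdots \,,\, a_{\,n}\,\right\| \,\leq\, \left\|\,I \,+\, U\,\right\|\,\left\|\,f \,,\, a_{\,2} \,,\, \cdots \,,\, a_{\,n}\,\right\|$\, yields the upper estimate with constant \,$B\,\left\|\,I \,+\, U\,\right\|^{\,2}$.\;This step is routine.

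The main obstacle is the lower bound.\;The same substitution gives
\[\sum_{i \,=\, 1}^{\infty}\,\left|\,\left<\,f \,,\, (\,I \,+\, U\,)\,f_{\,i} \,|\, a_{\,2} \,,\, \cdots \,,\, a_{\,n}\,\right>\,\right|^{\,2} \,\geq\, A\,\left\|\,K^{\,\ast}\,(\,I \,+\, U\,)\,f \,,\, a_{\,2} \,,\, \cdots \,,\, a_{\,n}\,\right\|^{\,2},\]
so what is needed is a lower estimate of \,$\left\|\,K^{\,\ast}\,(\,I \,+\, U\,)\,f\,\right\|$\, by a positive multiple of \,$\left\|\,K^{\,\ast}\,f\,\right\|$.\;I do not see how to secure this from positivity of \,$U$\, alone, since \,$K^{\,\ast}(\,I \,+\, U\,) \,=\, K^{\,\ast} \,+\, K^{\,\ast}\,U$\, need not be bounded below by \,$K^{\,\ast}$; indeed one can choose a positive \,$U$\, for which a unit vector in \,$\mathcal{R}(\,K\,)$\, is sent by \,$(\,I \,+\, U\,)$\, into the kernel of the analysis map, breaking the lower inequality.\;The estimate does follow once \,$U\,K \,=\, K\,U$: then \,$K^{\,\ast}(\,I \,+\, U\,) \,=\, (\,I \,+\, U\,)\,K^{\,\ast}$\, and, since \,$I \,+\, U \,\geq\, I$, one gets \,$\left\|\,(\,I \,+\, U\,)\,K^{\,\ast}\,f\,\right\| \,\geq\, \left\|\,K^{\,\ast}\,f\,\right\|$, closing the lower bound with constant \,$A$.\;I would therefore expect the commutation hypothesis \,$U\,K \,=\, K\,U$\, (present in the neighbouring Theorem (\ref{th6})) to be essential here.

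As an alternative route I would try to read the result off the addition Theorem (\ref{th8}) by regarding \,$\{\,f_{\,i} \,+\, U\,f_{\,i}\,\}$\, as the sum of \,$\{\,f_{\,i}\,\}$\, and \,$\{\,U\,f_{\,i}\,\}$.\;By Theorem (\ref{th7}), \,$\{\,f_{\,i}\,\}$\, has pre-frame operator \,$T$\, with \,$\mathcal{R}(\,K\,) \,\subset\, \mathcal{R}(\,T\,)$, and then \,$\{\,U\,f_{\,i}\,\}$\, has pre-frame operator \,$L \,=\, U\,T$; the cross operators are \,$T\,L^{\,\ast} \,=\, S\,U$\, and \,$L\,T^{\,\ast} \,=\, U\,S$\, with \,$S \,=\, T\,T^{\,\ast}$.\;However \,$(\,S\,U\,)^{\,\ast} \,=\, U\,S$, so requiring both cross operators to be positive forces \,$S\,U \,=\, U\,S$, and applying Theorem (\ref{th8}) also presupposes \,$\{\,U\,f_{\,i}\,\}$\, to be a \,$K$-frame, i.e.\ \,$\mathcal{R}(\,K\,) \,\subset\, \mathcal{R}(\,U\,T\,)$; both are compatibility conditions between \,$U$\, and \,$K$\, (or \,$S$\,) rather than consequences of positivity.\;Thus either route isolates the same gap, and I would flag the lower \,$K$-frame inequality, rescued by a commutation hypothesis, as the crux of the argument.
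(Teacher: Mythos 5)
Your diagnosis is correct, and the obstacle you isolate is a defect of the statement itself, not of your approach. The paper's own proof takes a third route: it computes the frame operator of \,$\{\,f_{\,i} \,+\, U\,f_{\,i}\,\} \,=\, \{\,(\,I \,+\, U\,)\,f_{\,i}\,\}$\, to be \,$(\,I \,+\, U\,)\,S_{F}\,(\,I \,+\, U\,)^{\,\ast}$\, and then invokes Theorem (\ref{th1.01}) after asserting that \,$(\,I \,+\, U\,)\,S_{F}\,(\,I \,+\, U\,)^{\,\ast} \,\geq\, S_{F} \,\geq\, A\,K\,K^{\,\ast}$\, ``since \,$S_{F},\, U$\, are positive.'' That asserted inequality is precisely your lower-bound gap in operator form: expanding gives \,$(\,I \,+\, U\,)\,S_{F}\,(\,I \,+\, U\,) \,=\, S_{F} \,+\, U\,S_{F} \,+\, S_{F}\,U \,+\, U\,S_{F}\,U$, and while \,$U\,S_{F}\,U \,\geq\, 0$, the cross term \,$U\,S_{F} \,+\, S_{F}\,U$\, need not be positive when the positive operators \,$U$\, and \,$S_{F}$\, do not commute, so the inequality fails in general. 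The paper offers no justification beyond positivity, so its proof has exactly the hole you predicted.

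A two-dimensional example in \,$X_{F}$\, confirms that no proof can exist without an extra hypothesis: take \,$K$\, the orthogonal projection onto \,$\mathrm{span}\,\{\,e_{\,1}\,\}$, \,$f_{\,1} \,=\, e_{\,1}$\, and \,$f_{\,i} \,=\, 0$\, for \,$i \,\geq\, 2$\, (a Parseval \,$K$-frame), and \,$U \,=\, \left(\begin{smallmatrix} 1 & 1\\ 1 & 1\end{smallmatrix}\right)$, which is positive. Then \,$(\,I \,+\, U\,)\,f_{\,1} \,=\, 2\,e_{\,1} \,+\, e_{\,2}$, and the vector \,$f \,=\, e_{\,1} \,-\, 2\,e_{\,2}$\, annihilates every \,$(\,I \,+\, U\,)\,f_{\,i}$\, while \,$K^{\,\ast}\,f \,=\, e_{\,1} \,\neq\, 0$, so the lower \,$K$-frame inequality fails for the new family. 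Your proposed repair, assuming \,$U\,K \,=\, K\,U$\, (equivalently \,$K^{\,\ast}\,U \,=\, U\,K^{\,\ast}$, whence \,$\left\|\,K^{\,\ast}\,(\,I \,+\, U\,)\,f\,\right\| \,=\, \left\|\,(\,I \,+\, U\,)\,K^{\,\ast}\,f\,\right\| \,\geq\, \left\|\,K^{\,\ast}\,f\,\right\|$\, because \,$(\,I \,+\, U\,)^{\,2} \,\geq\, I$), does close the gap with the same constant \,$A$; assuming instead that \,$U$\, commutes with \,$S_{F}$\, rescues the paper's operator-inequality route, since then \,$(\,I \,+\, U\,)\,S_{F}\,(\,I \,+\, U\,) \,=\, S_{F}\,(\,I \,+\, U\,)^{\,2} \,\geq\, S_{F}$. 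Your upper-bound computation and your analysis of the alternative route through Theorems (\ref{th7}) and (\ref{th8}) are both correct; they isolate the same missing commutation hypothesis.
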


\begin{proof}
Let \,$\{\,f_{\,i}\,\}^{\infty}_{i \,=\, 1}$\; be \,$K$-frame associated to \,$\left(\,a_{\,2},\, \cdots,\, a_{\,n}\,\right)$\, for \,$X$\, with frame operator \,$S_{F}$.\;Then for each \,$f \,\in\, X_{F}$, there exist \,$A,\, B \,>\, 0$\; such that 
\[A \,\|\,K^{\,\ast}\,f \,,\, a_{\,2} \,,\, \cdots \,,\, a_{\,n}\,\|^{\,2} \,\leq\, \sum\limits_{i \,=\, 1}^{\infty}\, \left|\,\left <\,f \,,\, f_{\,i} \,|\, a_{\,2} \,,\, \cdots \,,\, a_{\,n}\,\right >\,\right|^{\,2} \,\leq\, B \,\|\,f \,,\, a_{\,2} \,,\, \cdots \,,\, a_{\,n}\,\|^{\,2}.\]
It is easy to verify that \,$ \left <\,S_{\,F}\,f \,,\,  f \,|\, a_{\,2} \,,\, \cdots \,,\, a_{\,n}\,\right> \,=\, \sum\limits_{i \,=\, 1}^{\infty}\, \left|\,\left <\,f \,,\, f_{\,i} \,|\, a_{\,2} \,,\, \cdots \,,\, a_{\,n}\,\right >\,\right|^{\,2}$.\;Thus,
\[A \;\|\,K^{\,\ast}\,f \,,\, a_{\,2} \,,\, \cdots \,,\, a_{\,n}\,\|^{\,2} \,\leq\, \left <\,S_{\,F}\,f \,,\,  f \,|\, a_{\,2} \,,\, \cdots \,,\, a_{\,n}\,\right> \,\leq\, B \; \|\,f \,,\, a_{\,2} \,,\, \cdots \,,\, a_{\,n}\,\|^{\,2}.\]
This implies that \,$A\,K\, K^{\,\ast} \,\leq\, S_{F} \,\leq\, B\,I$.\;Now, for each \,$f \,\in\, X_{F}$,
\[\sum\limits^{\infty}_{i \,=\, 1}\,\left <\,f \,,\, f_{\,i} \,+\, U\,f_{\,i} \,|\, a_{\,2} \,,\, \cdots \,,\, a_{\,n}\,\right>\, \left(\,f_{\,i} \,+\, U\, f_{\,i}\,\right)\]
\[\hspace{2.3cm}\;=\; \sum\limits^{\infty}_{i \,=\, 1}\,\left <\,f \,,\, (\,I \,+\, U\,)\,f_{\,i} \,|\, a_{\,2} \,,\, \cdots \,,\, a_{\,n}\,\right>\,\left(\,I \,+\, A\,\right)\,f_{\,i}\] 
\[\hspace{2.5cm}\;=\, (\,I \,+\, U\,)\,\sum\limits^{\infty}_{i \,=\, 1}\,\left <\,\left(\,I \,+\, U\,\right)^{\,\ast}\,f \,,\, f_{\,i} \,|\, a_{\,2} \,,\, \cdots \,,\, a_{\,n}\,\right>\,f_{\,i}\]
\[ \,=\, (\,I \,+\, U\,)\,S_{F}\,\left (\,I \,+\, U\,\right)^{\,\ast}\,f.\hspace{1.3cm}\]
This shows that the corresponding frame operator for \,$\{\,f_{\,i} \,+\, U\,f_{\,i}\,\}^{\infty}_{i \,=\, 1}$\, is \,$(\,I \,+\, U\,)\,S_{F}\,\left (\,I \,+\, U\,\right)^{\,\ast}$.\;Since \,$S_{F},\, U$\, are positive operators, \,$(\,I \,+\, U\,)\,S_{F}\,\left (\,I \,+\, U\,\right)^{\,\ast} \,\geq\, S_{F} \,\geq\, A\,K\, K^{\,\ast}$, by Theorem (\ref{th1.01}), \,$\{\,f_{\,i} \,+\, U\,f_{\,i}\,\}^{\infty}_{i \,=\, 1}$\, is a \,$K$-frame associated to \,$\left(\,a_{\,2},\, \cdots,\, a_{\,n}\,\right)$\, for \,$X$.  
\end{proof}

%=====================================
\section{Tight $K$-frame and its properties in $n$-Hilbert space}
%=====================================

\begin{definition}
A sequence \,$\{\,f_{\,i}\,\}_{i \,=\,1}^{\infty}$\, in \,$X$\, is said to be a tight K-frame associated to \,$\left(\,a_{\,2},\, \cdots,\, a_{\,n}\,\right)$\, for \,$X$\, if there exist constants \,$0 \,<\, A \,\leq\, B \,<\, \infty$\, such that 
\begin{equation}\label{eq6}
\sum\limits^{\infty}_{i \,=\, 1}\, \left |\, \left <\,f \,,\, f_{\,i} \,|\, a_{\,2} \,,\, \cdots \,,\, a_{\,n}\,\right >\,\right|^{\,2} \,=\, A\, \left\|\,K^{\,\ast} \,f \,,\, a_{\,2} \,,\, \cdots \,,\, a_{\,n}\,\right\|^{\,2}\; \,\forall\, f \,\in\, X_{F} 
\end{equation}
If \,$A \,=\, 1$, then \,$\{\,f_{\,i}\,\}_{i \,=\,1}^{\infty}$\; is called Parseval K-frame associated to \,$\left(\,a_{\,2} \,,\, \cdots \,,\, a_{\,n}\,\right)$\, for \,$X$.
\end{definition}

\begin{remark}
From (\ref{eq6}), we can write
\[ \sum\limits^{\infty}_{i \,=\, 1}\, \left|\,\left <\,\dfrac{1}{\sqrt{A}}\, f \;,\; f_{\,i} \;|\; a_{\,2} \;,\; \cdots \;,\; a_{\,n}\,\right >\,\right |^{\,2} \,=\,\left\|\,K^{\,\ast} \,f \,,\, a_{\,2} \,,\, \cdots \,,\, a_{\,n}\,\right\|^{\,2}.\]
Therefore, if \,$\{\,f_{\,i}\,\}_{i \,=\,1}^{\infty}$\, is a tight K-frame associated to \,$\left(\,a_{\,2},\, \cdots,\, a_{\,n}\,\right)$\, with bound \,$A$\, then the family \,$\left\{\,\dfrac{1}{\,\sqrt{A}}\,f_{\,i}\,\right\}^{\infty}_{i \,=\, 1}$\, is a Parseval K-frame associated to \,$\left(\,a_{\,2},\, \cdots,\, a_{\,n}\,\right)$\, for \,$X$.
\end{remark}

\begin{theorem}
Let \,$\{\,f_{\,i}\,\}_{i \,=\,1}^{\infty}$\; be a tight frame associated to \,$\left(\,a_{\,2},\, \cdots,\, a_{\,n}\,\right)$\, for \,$X$\; with bound \,$A$\, and \,$K \,\in\, \mathcal{B}\,(\,X_{F}\,)$, then \,$\{\,K\,f_{\,i}\,\}_{i \,=\,1}^{\infty}$\; is a tight K-frame associated to \,$\left(\,a_{\,2},\, \cdots,\, a_{\,n}\,\right)$\, for \,$X$\, with bound \,$A$. 
\end{theorem}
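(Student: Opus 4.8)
The plan is to reduce the claim to the defining equality of the given tight frame by transferring the operator $K$ from one slot of the $n$-inner product to the other via its adjoint. Since $K \in \mathcal{B}(X_F)$ and the $F$-inner product $\langle \cdot, \cdot \,|\, a_2, \ldots, a_n\rangle$ is exactly the inner product of the Hilbert space $X_F$ (by the construction recalled in the Note preceding Definition \ref{def0.1}), the adjoint $K^*$ obeys
$$\left\langle f, K f_i \,|\, a_2, \ldots, a_n\right\rangle = \left\langle K^* f, f_i \,|\, a_2, \ldots, a_n\right\rangle$$
for every $f \in X_F$ and every index $i$. This identity is the heart of the argument.

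First I will fix $f \in X_F$ and rewrite the frame sum for the candidate family $\{K f_i\}_{i=1}^{\infty}$ using the displayed adjoint identity, obtaining
$$\sum_{i=1}^{\infty} \left|\left\langle f, K f_i \,|\, a_2, \ldots, a_n\right\rangle\right|^2 = \sum_{i=1}^{\infty} \left|\left\langle K^* f, f_i \,|\, a_2, \ldots, a_n\right\rangle\right|^2.$$
Next I will invoke the hypothesis that $\{f_i\}_{i=1}^{\infty}$ is a tight frame with bound $A$, namely $\sum_{i=1}^{\infty} |\langle g, f_i \,|\, a_2, \ldots, a_n\rangle|^2 = A\,\|g, a_2, \ldots, a_n\|^2$ for all $g \in X_F$, and apply it to the single vector $g = K^* f$. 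This yields
$$\sum_{i=1}^{\infty} \left|\left\langle K^* f, f_i \,|\, a_2, \ldots, a_n\right\rangle\right|^2 = A\,\left\|K^* f, a_2, \ldots, a_n\right\|^2.$$
Chaining the two displays gives, for every $f \in X_F$, the equality $\sum_{i=1}^{\infty} |\langle f, K f_i \,|\, a_2, \ldots, a_n\rangle|^2 = A\,\|K^* f, a_2, \ldots, a_n\|^2$, which is precisely the tight $K$-frame condition (\ref{eq6}) with bound $A$.

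I do not anticipate any genuine obstacle; the whole proof is a one-line substitution once the adjoint identity is available. The only point meriting a word of care is the verification that the adjoint $K^*$ computed in $\mathcal{B}(X_F)$ with respect to $\langle \cdot, \cdot\rangle_F$ is the same operator that realizes the transfer inside the $n$-inner product --- but this is immediate from the identification of $\langle \cdot, \cdot \,|\, a_2, \ldots, a_n\rangle$ with the $F$-inner product on $X_F$. Because tightness already encodes the coincidence of the lower and upper bounds in a single equality, no separate check of an upper (Bessel-type) estimate is required.
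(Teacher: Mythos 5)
Your proposal is correct and follows exactly the paper's own argument: transfer $K$ to the other slot via the adjoint identity $\left\langle f, K f_i \,|\, a_2, \ldots, a_n\right\rangle = \left\langle K^* f, f_i \,|\, a_2, \ldots, a_n\right\rangle$ and then apply the tight-frame equality to the vector $K^* f$. No differences worth noting.
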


\begin{proof}
Since  \,$\{\,f_{\,i}\,\}_{i \,=\,1}^{\infty}$\; is a tight frame associated to \,$\left(\,a_{\,2},\, \cdots,\, a_{\,n}\,\right)$\, for \,$X$\, with bound \,$A$, for any \,$f \,\in\, X_{F}$, we have
\[\sum\limits^{\infty}_{i \,=\, 1}\, \left |\, \left <\,f \,,\, K\,f_{\,i} \,|\, a_{\,2} \,,\, \cdots \,,\, a_{\,n}\,\right >\,\right |^{\,2} \,=\, \sum\limits^{\infty}_{i \,=\, 1}\, \left|\,\left <\,K^{\,\ast}\,f \,,\, f_{\,i} \,|\, a_{\,2} \,,\, \cdots \,,\, a_{\,n}\,\right >\,\right |^{\,2}\]
\[\hspace{4.3cm}=\, A\,\|\,K^{\,\ast}\,f \,,\, a_{\,2} \,,\, \cdots \,,\, a_{\,n}\|^{\,2}\]and hence \,$\{\,K\,f_{\,i}\,\}_{i \,=\,1}^{\infty}$\; is a tight \,$K$-frame associated to \,$\left(\,a_{\,2},\, \cdots,\, a_{\,n}\,\right)$\, for \,$X$\, with bound \,$A$.     
\end{proof}

\begin{theorem}
Let \,$K,\, T \,\in\, \mathcal{B}\,(\,X_{F}\,)$\, and \,$\{\,f_{\,i}\,\}_{i \,=\,1}^{\infty}$\, be a tight K-frame associated to \,$\left(\,a_{\,2},\, \cdots,\, a_{\,n}\,\right)$\, for \,$X$\, with bound \,$A$.\;Then \,$\{\,T\,f_{\,i}\,\}_{i \,=\,1}^{\infty}$\; is a tight T\,K-frame associated to \,$\left(\,a_{\,2},\, \cdots,\, a_{\,n}\,\right)$\, for \,$X$\; with bound \,$A$.
\end{theorem}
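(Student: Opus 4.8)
The plan is to reduce the statement directly to the tight $K$-frame hypothesis by transferring the operator $T$ from the frame vectors onto the test vector via the adjoint. Fix an arbitrary \,$f \,\in\, X_{F}$. First I would rewrite the frame coefficients for the candidate system \,$\{\,T\,f_{\,i}\,\}$\, as
\[
\sum\limits^{\infty}_{i \,=\, 1}\, \left|\,\left <\,f \,,\, T\,f_{\,i} \,|\, a_{\,2} \,,\, \cdots \,,\, a_{\,n}\,\right >\,\right|^{\,2} \,=\, \sum\limits^{\infty}_{i \,=\, 1}\, \left|\,\left <\,T^{\,\ast}\,f \,,\, f_{\,i} \,|\, a_{\,2} \,,\, \cdots \,,\, a_{\,n}\,\right >\,\right|^{\,2},
\]
which is legitimate because the $n$-inner product induces a genuine inner product on the Hilbert space \,$X_{F}$, so \,$T \,\in\, \mathcal{B}\,(\,X_{F}\,)$\, has a well-behaved adjoint \,$T^{\,\ast}$\, acting inside \,$\left<\,\cdot \,,\, \cdot \,|\, a_{\,2} \,,\, \cdots \,,\, a_{\,n}\,\right>$.

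Next I would apply the tight $K$-frame equality for \,$\{\,f_{\,i}\,\}$\,---valid for every element of \,$X_{F}$---to the particular vector \,$T^{\,\ast}\,f$, obtaining
\[
\sum\limits^{\infty}_{i \,=\, 1}\, \left|\,\left <\,T^{\,\ast}\,f \,,\, f_{\,i} \,|\, a_{\,2} \,,\, \cdots \,,\, a_{\,n}\,\right >\,\right|^{\,2} \,=\, A\, \left\|\,K^{\,\ast}\,T^{\,\ast}\,f \,,\, a_{\,2} \,,\, \cdots \,,\, a_{\,n}\,\right\|^{\,2}.
\]
Finally I would invoke the elementary operator identity \,$(\,T\,K\,)^{\,\ast} \,=\, K^{\,\ast}\,T^{\,\ast}$\, to rewrite \,$K^{\,\ast}\,T^{\,\ast}\,f \,=\, (\,T\,K\,)^{\,\ast}\,f$, so that the right-hand side becomes \,$A\, \left\|\,(\,T\,K\,)^{\,\ast}\,f \,,\, a_{\,2} \,,\, \cdots \,,\, a_{\,n}\,\right\|^{\,2}$. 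Since \,$f$\, was arbitrary, chaining these three equalities yields exactly the defining equation of a tight \,$T\,K$-frame associated to \,$\left(\,a_{\,2},\, \cdots,\, a_{\,n}\,\right)$\, with bound \,$A$.

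This argument is essentially computational and mirrors the preceding tight-frame theorem, where \,$T$\, played the role of \,$K$; there is no substantive obstacle. The only point that requires a moment's care is the very first step, namely that passing \,$T$\, to \,$T^{\,\ast}$\, across the form \,$\left<\,\cdot \,,\, \cdot \,|\, a_{\,2} \,,\, \cdots \,,\, a_{\,n}\,\right>$\, is justified---this follows because the operators here are taken on \,$X_{F}$, on which this form is a bona fide inner product. I would also emphasize that no positivity or invertibility hypothesis on \,$T$\, or \,$K$\, is needed, since the tight-frame condition is an exact equality rather than a two-sided estimate, so the bound \,$A$\, is transported unchanged.
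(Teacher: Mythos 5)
Your proposal is correct and follows exactly the paper's own argument: pass $T$ to $T^{\,\ast}$ inside the $n$-inner product, apply the tight $K$-frame identity to $T^{\,\ast}\,f$, and rewrite $K^{\,\ast}\,T^{\,\ast}\,f$ as $(\,T\,K\,)^{\,\ast}\,f$. Nothing further is needed.
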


\begin{proof}
Since  \,$\{\,f_{\,i}\,\}_{i \,=\,1}^{\infty}$\, is a tight\,$K$-frame associated to \,$\left(\,a_{\,2},\, \cdots,\, a_{\,n}\,\right)$\, for \,$X$\, with bound \,$A$, for any \,$f \,\in\, X_{F}$, we have
\[\sum\limits^{\infty}_{i \,=\, 1}\, \left|\,\left <\,f \,,\, T\,f_{\,i} \,|\, a_{\,2} \,,\, \cdots \,,\, a_{\,n}\,\right>\,\right |^{\,2} \,=\, \sum\limits^{\infty}_{i \,=\, 1}\, \left |\, \left <\,T^{\,\ast}\,f \,,\, f_{\,i} \,|\, a_{\,2} \,,\, \cdots \,,\, a_{\,n}\,\right>\,\right |^{\,2}\]
\[\hspace{2.5cm}=\, A\,\|\,K^{\,\ast}\,(\,T^{\,\ast}\,f\,),\, a_{\,2},\, \cdots,\, a_{\,n}\|^{\,2} =\; A\,\|\,(\,T\,K\,)^{\,\ast}\,f,\, a_{\,2},\, \cdots,\, a_{\,n}\,\|^{\,2}\] 
and hence \,$\{\,T\,f_{\,i}\,\}_{i \,=\,1}^{\infty}$\, is a tight \,$T\,K$-frame associated to \,$\left(\,a_{\,2},\, \cdots,\, a_{\,n}\,\right)$\, for \,$X$\, with bound \,$A$.
\end{proof}

\begin{theorem}
Let \,$\{\,f_{\,i}\,\}_{i \,=\,1}^{\infty}$\; be a tight K-frame associated to \,$\left(\,a_{\,2},\, \cdots,\, a_{\,n}\,\right)$\, for \,$X$\, with bound \,$A$.\;Then there exists a Bessel sequence \,$\{\,g_{\,i}\,\}_{i \,=\,1}^{\infty}$\, associated to \,$\left(\,a_{\,2},\, \cdots,\, a_{\,n}\,\right)$\, with bound \,$B$\, such that for all \,$f \,\in\, X_{F}$,
\[K\,(\,f\,) \,=\, \sum\limits^{\infty}_{i \,=\, 1}\, \left <\,f \;,\; \,g_{\,i} \;|\; a_{\,2} \,,\, \cdots \,,\, a_{\,n}\,\right >\,f_{\,i}\, \;\text{and}\; \;A\,B \,\geq\, 1.\] 
\end{theorem}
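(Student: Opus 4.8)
The plan is to pass to the associated Hilbert space $X_{F}$ and read the tight $K$-frame condition as an operator identity. Let $T \,:\, l^{\,2}(\mathbb{N}) \to X_{F}$ be the pre-frame operator of $\{f_{\,i}\}$, so that $T\,e_{\,i} = f_{\,i}$ and $S_{F} = T\,T^{\,\ast}$. For every $f \in X_{F}$ the defining equality gives $\langle T\,T^{\,\ast}f, f \,|\, a_{\,2}, \cdots, a_{\,n}\rangle = \sum_{i} |\langle f, f_{\,i} \,|\, a_{\,2}, \cdots, a_{\,n}\rangle|^{\,2} = A\,\|K^{\,\ast}f, a_{\,2}, \cdots, a_{\,n}\|^{\,2} = A\,\langle K\,K^{\,\ast}f, f \,|\, a_{\,2}, \cdots, a_{\,n}\rangle$. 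Since $T\,T^{\,\ast}$ and $A\,K\,K^{\,\ast}$ are self-adjoint on $X_{F}$ and their quadratic forms agree on all of $X_{F}$, I conclude (by polarization) that $T\,T^{\,\ast} = A\,K\,K^{\,\ast}$, and in particular $K\,K^{\,\ast} \le \tfrac{1}{A}\,T\,T^{\,\ast}$.

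With this inequality Theorem (\ref{th1}) applies, exactly as in the proof of Theorem (\ref{th7}): there is a bounded operator $W \,:\, X_{F} \to l^{\,2}(\mathbb{N})$ with $K = T\,W$ and $\mathcal{R}(K) \subset \mathcal{R}(T)$. I then set $g_{\,i} = W^{\,\ast}e_{\,i}$. Because $\langle f, g_{\,i} \,|\, a_{\,2}, \cdots, a_{\,n}\rangle = \langle f, W^{\,\ast}e_{\,i}\rangle_{F} = \langle W f, e_{\,i}\rangle_{l^{\,2}}$, the vector $W f$ has coordinates $\langle f, g_{\,i} \,|\, a_{\,2}, \cdots, a_{\,n}\rangle$, whence $K f = T\,W f = \sum_{i} \langle f, g_{\,i} \,|\, a_{\,2}, \cdots, a_{\,n}\rangle f_{\,i}$, which is the asserted reconstruction formula. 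Moreover $\sum_{i} |\langle f, g_{\,i} \,|\, a_{\,2}, \cdots, a_{\,n}\rangle|^{\,2} = \|W f\|^{\,2}_{l^{\,2}} \le \|W\|^{\,2}\,\|f, a_{\,2}, \cdots, a_{\,n}\|^{\,2}$, so $\{g_{\,i}\}$ is a Bessel sequence associated to $(a_{\,2}, \cdots, a_{\,n})$ with bound $B = \|W\|^{\,2}$.

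It remains to prove $A\,B \ge 1$, which I regard as the one genuinely delicate step. From $K = T\,W$ I get $K^{\,\ast} = W^{\,\ast}T^{\,\ast}$, so for every $f$, $\|K^{\,\ast}f, a_{\,2}, \cdots, a_{\,n}\|^{\,2} = \|W^{\,\ast}T^{\,\ast}f, a_{\,2}, \cdots, a_{\,n}\|^{\,2} \le \|W\|^{\,2}\,\|T^{\,\ast}f\|^{\,2}_{l^{\,2}} = B \sum_{i} |\langle f, f_{\,i} \,|\, a_{\,2}, \cdots, a_{\,n}\rangle|^{\,2} = A\,B\,\|K^{\,\ast}f, a_{\,2}, \cdots, a_{\,n}\|^{\,2}$, the last equality again by tightness. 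Choosing any $f$ with $K^{\,\ast}f \ne 0$ (such $f$ exists unless $K = 0$, a degenerate case that may be excluded) and cancelling the common positive factor yields $A\,B \ge 1$. The points that need care, each mirroring an argument already used earlier in the paper, are the passage from the scalar identity to the operator identity $T\,T^{\,\ast} = A\,K\,K^{\,\ast}$, the legitimacy of invoking Theorem (\ref{th1}) for operators acting between $l^{\,2}(\mathbb{N})$ and $X_{F}$, and confirming that the constructed $g_{\,i}$ genuinely form a Bessel sequence associated to $(a_{\,2}, \cdots, a_{\,n})$.
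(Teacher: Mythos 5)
Your proof is correct, but it follows a genuinely different route from the paper's. The paper does not construct the sequence $\{g_{\,i}\}$ at all: it invokes an external result (Theorem 2.4 of a reference cited as ``Janfada'', which in fact does not appear in the bibliography) to obtain a Bessel sequence satisfying both $K(f) \,=\, \sum_{i} \left<\,f ,\, g_{\,i} \,|\, a_{\,2}, \cdots, a_{\,n}\,\right> f_{\,i}$ and the dual relation $K^{\,\ast}(f) \,=\, \sum_{i} \left<\,f ,\, f_{\,i} \,|\, a_{\,2}, \cdots, a_{\,n}\,\right> g_{\,i}$, and then derives $A\,B \,\geq\, 1$ by expanding $\left\|\,K^{\,\ast}f ,\, a_{\,2}, \cdots, a_{\,n}\,\right\|^{\,2}$ through that dual relation, taking a supremum over unit vectors and applying Cauchy--Schwarz to land back on $A\,B\sum_{i}\left|\left<\,f, f_{\,i}\,|\,a_{\,2},\cdots,a_{\,n}\,\right>\right|^{\,2}$. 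You instead build $\{g_{\,i}\}$ explicitly: the tightness identity gives $A\,K\,K^{\,\ast} \,\leq\, T\,T^{\,\ast}$ (you do not even need the full operator equality via polarization, though it is valid), Douglas' factorization yields $K \,=\, T\,W$, and $g_{\,i} \,=\, W^{\,\ast}e_{\,i}$ then satisfies the reconstruction formula and the Bessel bound $B \,=\, \|\,W\,\|^{\,2}$; your $A\,B \,\geq\, 1$ falls out of $K^{\,\ast} \,=\, W^{\,\ast}T^{\,\ast}$ in one line. Your version is more self-contained (it repairs the dangling citation) and mirrors the Douglas-theorem technique the paper already uses in its Theorem 3.6, where the same cross-space application of the factorization theorem is taken for granted; its only cost is that you must exclude the degenerate case $K \,=\, 0$ to cancel $\left\|\,K^{\,\ast}f,\, a_{\,2}, \cdots, a_{\,n}\,\right\|^{\,2}$, an assumption the paper's own cancellation step makes tacitly as well.
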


\begin{proof}
Let \,$\{\,f_{\,i}\,\}_{i \,=\,1}^{\infty}$\, be a tight \,$K$-frame associated to \,$\left(\,a_{\,2},\, \cdots,\, a_{\,n}\,\right)$\, for \,$X$\, with bound \,$A$.\;Then by Theorem (2.4)\, of \,(\cite{Janfada}), there exists a Bessel sequence \,$\{\,g_{\,i}\,\}_{i \,=\,1}^{\infty}$\, associated to \,$\left(\,a_{\,2},\, \cdots,\, a_{\,n}\,\right)$\, with bound \,$B$\, such that
\[K\,(\,f\,) \,=\, \sum\limits^{\infty}_{i \,=\, 1}\, \left <\,f \,,\, \,g_{\,i} \,|\, a_{\,2} \,,\, \cdots \,,\, a_{\,n}\,\right >\,f_{\,i}\, \;\;\text{and}\]
\[K^{\,\ast}\,(\,f\,) \,=\, \sum\limits^{\infty}_{i \,=\, 1}\, \left <\,f \,,\, \,f_{\,i} \,|\, a_{\,2} \,,\, \cdots \,,\, a_{\,n}\,\right >\,g_{\,i}\; \;\forall\; f \,\in\, X_{F}.\] Since \,$\{\,f_{\,i}\,\}_{i \,=\,1}^{\infty}$\, is a tight \,$K$-frame associated to \,$\left(\,a_{\,2},\, \cdots,\, a_{\,n}\,\right)$\, for \,$X$, we have 
\[ \sum\limits^{\infty}_{i \,=\, 1}\, \left|\,\left<\,f \,,\, f_{\,i} \,|\, a_{\,2} \,,\, \cdots \,,\, a_{\,n}\,\right>\,\right|^{\,2} \,=\, A\, \|\,K^{\,\ast} \,f \,,\, a_{\,2} \,,\, \cdots \,,\, a_{\,n}\,\|^{\,2}\]
\[=\, A\, \left\|\,\sum\limits^{\infty}_{i \,=\, 1}\,\left<\,f \,,\, f_{\,i} \,|\, a_{\,2} \,,\, \cdots \,,\, a_{\,n}\,\right>\,g_{\,i} \;,\; a_{\,2} \,,\, \cdots \,,\, a_{\,n}\,\right\|^{\,2}\hspace{4.5cm}\]
\[=\, A\,\sup\limits_{\|\,g \,,\, a_{\,2} \,,\, \cdots \,,\, a_{\,n}\,\| \,=\, 1}\,\left\{\,\left\|\;\sum\limits^{\infty}_{i \,=\, 1}\,\left<\,\left <\,f \,,\, \,f_{\,i} \,|\, a_{\,2} \,,\, \cdots \,,\, a_{\,n}\,\right >\,g_{\,i} \;,\; g \,|\, a_{\,2} \,,\, \cdots \,,\, a_{\,n} \,\right>\,\right\|^{\,2}\right\}\hspace{.2cm}\]
\[ =\; A\,\sup\limits_{\|\,g \,,\, a_{\,2} \,,\, \cdots \,,\, a_{\,n}\,\| \,=\, 1}\,\left\{\,\left\|\,\sum\limits^{\infty}_{i \,=\, 1}\,\left <\,f \,,\, \,f_{\,i} \,|\, a_{\,2} \,,\, \cdots \,,\, a_{\,n}\,\right >\,\left<\,g_{\,i} \,,\, g \,|\, a_{\,2} \,,\, \cdots \,,\, a_{\,n} \,\right>\,\right\|^{\,2}\right\}\hspace{.2cm}\]
\[\hspace{.3cm}\leq\, A\,\sup\limits_{\|\,g \,,\, a_{\,2} \,,\, \cdots \,,\, a_{\,n}\,\| \,=\, 1}\,\left\{\,\sum\limits^{\infty}_{i \,=\, 1}\,\left|\,\left <\,f \,,\, \,f_{\,i} \,|\, a_{\,2} \,,\, \cdots \,,\, a_{\,n}\,\right >\,\right|^{\,2}\, \sum\limits^{\infty}_{i \,=\, 1}\,\left|\,\left<\,g_{\,i} \,,\, g \,|\, a_{\,2} \,,\, \cdots \,,\, a_{\,n} \,\right>\,\right|^{\,2}\right\}\]
\[\leq\, A\,\sup\limits_{\|\,g \,,\, a_{\,2} \,,\, \cdots \,,\, a_{\,n}\,\| \,=\, 1}\,\left\{\,\sum\limits^{\infty}_{i \,=\, 1}\,\left|\,\left <\,f \,,\, \,f_{\,i} \,|\, a_{\,2} \,,\, \cdots \,,\, a_{\,n}\,\right >\,\right|^{\,2}\; B\, \|\, g \;,\; a_{\,2} \;,\; \cdots \;,\; a_{\,n}\,\|^{\,2}\,\right\}\]
\[[\;\text{since \,$\{\,g_{\,i}\,\}_{i \,=\,1}^{\infty}$\, is a Bessel sequence associated to \,$\left(\,a_{\,2},\, \cdots,\, a_{\,n}\,\right)$\, with bound \,$B$}\;]\]
\[=\, A\,B\,\sum\limits^{\infty}_{i \,=\, 1}\,\left|\,\left <\,f \;,\; \,f_{\,i} \;|\; a_{\,2} \;,\; \cdots \;,\; a_{\,n}\,\right >\,\right|^{\,2}.\hspace{6.5cm}\]The above calculation shows that \,$A\,B \,\geq\, 1$.
\end{proof}

\begin{theorem}
Let \,$\{\,f_{\,i}\,\}_{i \,=\,1}^{\infty}$\, and \,$\{\,g_{\,i}\,\}_{i \,=\,1}^{\infty}$\; be two Parseval K-frame associated to \,$\left(\,a_{\,2},\, \cdots,\, a_{\,n}\,\right)$\, for \,$X$\; with the corresponding synthesis operators \,$T$\, and \,$L$, respectively.\;If \,$T\,L^{\,\ast} \,=\, \theta$, where \,$\theta$\, is the null operator on \,$X_{F}$\, then \,$\{\,f_{\,i} \,+\, g_{\,i}\,\}_{i \,=\,1}^{\infty}$\, is a tight K-frame associated to \,$\left(\,a_{\,2},\, \cdots,\, a_{\,n}\,\right)$\, with frame bound \,$2$.
\end{theorem}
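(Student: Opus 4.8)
The plan is to read off, from the Parseval hypothesis, that the frame operators of both families coincide with \,$K\,K^{\,\ast}$, and then to compute the analysis quantity of the summed family directly through its synthesis operator \,$T \,+\, L$, where the vanishing of the two cross terms forces the frame bound to be exactly \,$2$.

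First I would translate the Parseval condition into operator language.\;Since \,$\left\{\,f_{\,i}\,\right\}^{\infty}_{i \,=\, 1}$\, is a Parseval \,$K$-frame, for every \,$f \,\in\, X_{F}$\, we have \,$\left<\,T\,T^{\,\ast}\,f \,,\, f \,|\, a_{\,2} \,,\, \cdots \,,\, a_{\,n}\,\right> \,=\, \sum\limits^{\infty}_{i \,=\, 1}\,\left|\,\left<\,f \,,\, f_{\,i} \,|\, a_{\,2} \,,\, \cdots \,,\, a_{\,n}\,\right>\,\right|^{\,2} \,=\, \left\|\,K^{\,\ast}\,f \,,\, a_{\,2} \,,\, \cdots \,,\, a_{\,n}\,\right\|^{\,2} \,=\, \left<\,K\,K^{\,\ast}\,f \,,\, f \,|\, a_{\,2} \,,\, \cdots \,,\, a_{\,n}\,\right>$.\;As both \,$T\,T^{\,\ast}$\, and \,$K\,K^{\,\ast}$\, are self-adjoint and their quadratic forms agree for every \,$f$, I would conclude \,$T\,T^{\,\ast} \,=\, K\,K^{\,\ast}$, and identically \,$L\,L^{\,\ast} \,=\, K\,K^{\,\ast}$.

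Next I would identify the synthesis operator of the summed family.\;Since \,$T\,e_{\,i} \,=\, f_{\,i}$\, and \,$L\,e_{\,i} \,=\, g_{\,i}$\, for the \,$F$-orthonormal basis \,$\left\{\,e_{\,i}\,\right\}^{\infty}_{i \,=\, 1}$\, of \,$l^{\,2}\,(\,\mathbb{N}\,)$, we have \,$\left(\,T \,+\, L\,\right)\,e_{\,i} \,=\, f_{\,i} \,+\, g_{\,i}$, so \,$T \,+\, L$\, is the synthesis operator for \,$\left\{\,f_{\,i} \,+\, g_{\,i}\,\right\}^{\infty}_{i \,=\, 1}$.\;Exactly as in the opening computation of Theorem (\ref{th8}), for each \,$f \,\in\, X_{F}$\, one obtains \,$\sum\limits^{\infty}_{i \,=\, 1}\,\left|\,\left<\,f \,,\, f_{\,i} \,+\, g_{\,i} \,|\, a_{\,2} \,,\, \cdots \,,\, a_{\,n}\,\right>\,\right|^{\,2} \,=\, \left<\,\left(\,T \,+\, L\,\right)\,\left(\,T \,+\, L\,\right)^{\,\ast}\,f \,,\, f \,|\, a_{\,2} \,,\, \cdots \,,\, a_{\,n}\,\right>$.\;Expanding \,$\left(\,T \,+\, L\,\right)\,\left(\,T \,+\, L\,\right)^{\,\ast} \,=\, T\,T^{\,\ast} \,+\, T\,L^{\,\ast} \,+\, L\,T^{\,\ast} \,+\, L\,L^{\,\ast}$, the hypothesis \,$T\,L^{\,\ast} \,=\, \theta$\, kills the first cross term, while taking adjoints gives \,$L\,T^{\,\ast} \,=\, \left(\,T\,L^{\,\ast}\,\right)^{\,\ast} \,=\, \theta^{\,\ast} \,=\, \theta$, so the second cross term vanishes as well.\;Hence the operator collapses to \,$T\,T^{\,\ast} \,+\, L\,L^{\,\ast} \,=\, 2\,K\,K^{\,\ast}$, and substituting back yields \,$\sum\limits^{\infty}_{i \,=\, 1}\,\left|\,\left<\,f \,,\, f_{\,i} \,+\, g_{\,i} \,|\, a_{\,2} \,,\, \cdots \,,\, a_{\,n}\,\right>\,\right|^{\,2} \,=\, 2\,\left\|\,K^{\,\ast}\,f \,,\, a_{\,2} \,,\, \cdots \,,\, a_{\,n}\,\right\|^{\,2}$, which is precisely the tight \,$K$-frame identity with bound \,$2$.

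The computation is almost entirely routine given the machinery already assembled; the one step demanding a moment of care is the passage \,$L\,T^{\,\ast} \,=\, \theta$, which is not assumed directly but follows by taking the adjoint of the stated hypothesis \,$T\,L^{\,\ast} \,=\, \theta$.\;A second mild point is the justification of \,$T\,T^{\,\ast} \,=\, K\,K^{\,\ast}$\, from the equality of quadratic forms, which rests on the self-adjointness of both operators; everything else is a direct transcription of the additive computation carried out in Theorem (\ref{th8}).
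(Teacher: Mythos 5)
Your proposal is correct and follows essentially the same route as the paper: expand the quadratic form of $\left(\,T \,+\, L\,\right)\left(\,T \,+\, L\,\right)^{\,\ast}$, kill the cross terms using $T\,L^{\,\ast} \,=\, \theta$ (and its adjoint), and identify $T\,T^{\,\ast}$ and $L\,L^{\,\ast}$ with $K\,K^{\,\ast}$ via the Parseval condition. The only cosmetic differences are that you upgrade the Parseval identities to the operator equality $T\,T^{\,\ast} \,=\, K\,K^{\,\ast}$ (the paper works only with the norm identities), and you explicitly justify $L\,T^{\,\ast} \,=\, \theta$ by taking adjoints, a step the paper asserts without comment.
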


\begin{proof}
Let \,$\{\,f_{\,i}\,\}_{i \,=\,1}^{\infty}\, \;\text{and}\; \,\{\,g_{\,i}\,\}_{i \,=\,1}^{\infty}$\; be Parseval \,$K$-frames associated to \,$\left(\,a_{\,2},\, \cdots,\, a_{\,n}\,\right)$\, for \,$X$.\;Then by Theorem (\ref{th7}), there exist synthesis operators \,$T$\, and \,$L$\; such that \,$T\,e_{\,i} \,=\, f_{\,i},\; L\,e_{\,i} \,=\, g_{\,i}$\, with \,$\mathcal{R}\,(\,K\,) \,\subset\, \mathcal{R}\,(\,T\,),\; \mathcal{R}\,(\,K\,) \,\subset\, \mathcal{R}\,(\,L\,)$\, respectively, where \,$\{\,e_{\,i}\,\}^{\infty}_{i \,=\, 1}$\, is a \,$F$-orthonormal basis for \,$l^{\,2}\,(\,\mathbb{N}\,)$.\;Now, for \,$f \,\in\, X_{F}$,
\[T^{\,\ast} \,:\, X_{F} \,\to\, l^{\,2}\,(\,\mathbb{N}\,),\; T^{\,\ast}\,(\,f\,) \,=\, \sum\limits^{\infty}_{i \,=\, 1}\,\left <\,f \,,\, f_{\,i} \,|\, a_{\,2} \,,\, \cdots \,,\, a_{\,n}\,\right>\,e_{\,i},\, \;\text{and}\]
\[L^{\,\ast} \,:\, X_{F} \,\to\, l^{\,2}\,(\,\mathbb{N}\,),\; L^{\,\ast}\,(\,f\,) \,=\, \sum\limits^{\infty}_{i \,=\, 1}\,\left <\,f \,,\, g_{\,i} \,|\, a_{\,2} \,,\, \cdots \,,\, a_{\,n}\,\right>\,e_{\,i}.\]
Now from the definition of Parseval \,$K$-frame associated to \,$\left(\,a_{\,2},\, \cdots,\, a_{\,n}\,\right)$,
\begin{equation}\label{eq7} 
\left \|\,K^{\,\ast}f,\, a_{2},\, \cdots,\, a_{n}\,\right \|^{2} = \sum\limits^{\infty}_{i \,=\, 1}\,\left|\,\left <\,f,\, f_{\,i} \,|\, a_{2},\, \cdots,\, a_{n}\,\right >\,\right |^{2}\,=\, \left \|\,T^{\,\ast}f,\, a_{2},\, \cdots,\, a_{n}\,\right\|^{\,2},
\end{equation}

\begin{equation}\label{eq8} 
\left \|\,K^{\,\ast}f,\, a_{2},\, \cdots,\, a_{n}\,\right \|^{2} = \sum\limits^{\infty}_{i \,=\, 1}\,\left|\,\left <\,f,\, f_{\,i} \,|\, a_{2},\, \cdots,\, a_{n}\,\right >\,\right |^{2}\,=\, \left \|\,L^{\,\ast}f,\, a_{2},\, \cdots,\, a_{n}\,\right\|^{\,2}.
\end{equation}
Following the proof of the Theorem (\ref{th8}), it can be shown that for each \,$f \,\in\, X_{F}$, 
\[ \sum\limits^{\infty}_{i \,=\, 1}\, \left|\,\left <\,f \,,\, f_{\,i} \,+\, g_{\,i} \,|\, a_{\,2} \,,\, \cdots \,,\, a_{\,n}\,\right>\,\right |^{\,2} \,=\, \sum\limits^{\infty}_{i \,=\, 1}\,\left |\,\left <\,f \,,\, \left(\,T \,+\, L\,\right)\,e_{\,i} \,|\, a_{\,2} \,,\, \cdots \,,\, a_{\,n}\,\right>\,\right |^{\,2} \]
\[\hspace{4.5cm}\,=\, \left \|\,\left(\,T \,+\, L\,\right)^{\,\ast}\, f \,,\, a_{\,2} \,,\, \cdots \,,\, a_{\,n}\,\right\|^{\,2}\]
\[\;=\, \left <\,T\,T^{\,\ast}\, f \,,\, f \,|\, a_{\,2} \,,\, \cdots \,,\, a_{\,n}\right> \,+\, \left <\,T\, L^{\,\ast}\,f \,,\, f \,|\, a_{\,2} \,,\, \cdots \,,\, a_{\,n}\right> \,+\, \left <\,L\,T^{\,\ast}\, f \,,\, f \,|\, a_{\,2} \,,\, \cdots \,,\, a_{\,n}\right>\]
\[\hspace{.2cm} \,+\, \left <\,L\,L^{\,\ast}\,f \,,\, f \,|\, a_{\,2} \,,\, \cdots \,,\, a_{\,n}\right> \]
\[ \,=\, \left <\,T\, T^{\,\ast}\,f \,,\, f \,|\, a_{\,2} \,,\, \cdots \,,\, a_{\,n}\right> \,+\, \left <\,L\,L^{\,\ast}\,f \,,\, f \,|\, a_{\,2} \,,\, \cdots \,,\, a_{\,n}\right>\; [\;\text{since}\;  \,T\,L^{\ast} \,=\, \theta \,=\, L\,T^{\,\ast}\;]\]
\[ \,=\, \left <\,T^{\,\ast}\,f \,,\, T^{\,\ast}\,f \,|\, a_{\,2} \,,\, \cdots \,,\, a_{\,n}\,\right > \,+\, \left <\,L^{\,\ast}\, f \,,\, L^{\,\ast}\,f \,|\, a_{\,2} \,,\, \cdots \,,\, a_{\,n}\,\right > \hspace{3.8cm}\]
\[\,=\, \left \|\,T^{\,\ast}\,f \,,\, a_{\,2} \,,\, \cdots \,,\, a_{\,n}\,\right \|^{\,2} \,+\, \left\|\,L^{\,\ast}\,f \,,\, a_{\,2} \,,\, \cdots \,,\, a_{\,n}\,\right \|^{\,2}\hspace{7.8cm}\]
\[\,=\, \left \|\,K^{\,\ast}\,f \,,\, a_{\,2} \,,\, \cdots \,,\, a_{\,n}\,\right \|^{\,2} \,+\, \left \|\,K^{\,\ast}\,f \,,\, a_{\,2} \,,\, \cdots \,,\, a_{\,n}\,\right \|^{\,2}\; [\;\text{using (\ref{eq7}) and (\ref{eq8})}\;] \hspace{5cm}\]
\[\,=\, 2\,\left \|\,K^{\,\ast}\,f \,,\, a_{\,2} \,,\, \cdots \,,\, a_{\,n}\,\right \|^{\,2}. \hspace{11cm}\]
Hence, \,$\{\,f_{\,i} \;+\; g_{\,i}\,\}_{i \,=\,1}^{\infty}$\; is a tight\;$K$-frame associated to \,$\left(\,a_{\,2},\, \cdots,\, a_{\,n}\,\right)$\; for \,$X$\; with bound \,$2$.
\end{proof}

\end{document}